\patchcmd{\@settitle}{\uppercasenonmath\@title}{\scshape\large}{}{}
\patchcmd{\@setauthors}{\MakeUppercase}{\scshape\normalsize}{}{}
\newtheorem{theorem}{Theorem}[section]
\newtheorem{lemma}[theorem]{Lemma}
\newtheorem{prop}[theorem]{Proposition}
\newtheorem{corollary}[theorem]{Corollary}
\newtheorem{definition}[theorem]{Definition}
\newtheorem{remark}[theorem]{Remark}
\newtheorem{example}[theorem]{Example}
\newtheorem{assumption}[theorem]{Assumption}
\newcommand\R{\mathbb{R}}
\newcommand\Z{\mathbb{Z}}
\newcommand{\support}[1]{\delta^*((A^{#1})^Tv^{#1} \mid \mathcal{Z}_{#1})}
\newcommand{\supportx}[1]{\delta^*((A^{#1})^Tl_{#1}(x) \mid \mathcal{Z}_{#1})}
\begin{document}
\title{$\Gamma$--counterparts for robust nonlinear combinatorial and discrete optimization}
\author[1]{Dennis Adelhütte}
\author[1]{Frauke Liers}
\affil[1]{Friedrich--Alexander--Universität Erlangen--Nürnberg, Department Data Science, Cauerstraße 11, 91058 Erlangen}
\maketitle
\begin{abstract} 
$\Gamma$--uncertainty sets have been introduced for adjusting the degree of conservatism of robust counterparts of (discrete) linear programs. 
The contribution of this paper is a generalization of this approach to (mixed--integer) nonlinear optimization programs. We focus on the cases in which the uncertainty is linear or concave but also derive formulations for the general case. By applying reformulation techniques that have been established for nonlinear inequalities under uncertainty, we derive equivalent formulations of the robust counterpart that are not subject to uncertainty. The computational tractability depends on the structure of the functions under uncertainty and the geometry of its uncertainty set. We present cases where the robust counterpart of a nonlinear combinatorial program is solvable with a polynomial number of oracle calls for the underlying nominal program. Furthermore, we present robust counterparts for practical examples, namely for (discrete) linear, quadratic and piecewise linear settings. 
\end{abstract}

\begin{keywords}
Budget Uncertainty, Discrete Optimization, Combinatorial Optimization, Mixed-Integer Nonlinear Optimization, Robust Optimization, $\Gamma$--Uncertainty 
\end{keywords}


\section{Introduction} \label{sec:Intro} 
In recent years, optimization under uncertainty has gained importance and popularity. When an optimization program is subject to uncertainty, one can aim to solve it before all data are known -- sometimes, this is even mandatory. 
Two fields of research how to treat said uncertainties are stochastic and robust optimization. In stochastic optimization, one usually requires a sufficient amount of data to estimate or determine the underlying probability distribution. For further information on this, we refer the reader to the monograph \cite{Stochastic}. For robust optimization, one does not require probability distributions. Instead, for modeling the real--life situation, an uncertainty set is pre--determined and one optimizes over the variables while taking the uncertainty set into account simultaneously. Several approaches in robust optimization have been conducted in the past decades, especially in the field of (mixed--integer) linear programming. However, for combinatorial optimization, those are usually not applicable since the underlying program's structure is changed, rendering solution algorithms for the nominal problem not applicable. To circumvent this, Bertsimas and Sim introduced $\Gamma$--uncertainty sets in \cite{IntroGamma} and \cite{PriceRobust} for combinatorial optimization under interval uncertainty in a linear objective. Since recent research has focused on nonlinear robust optimization programming, we extend their approach for combinatorial and discrete programming with nonlinearities.

\paragraph*{Contribution:}
We propose and study a generic framework for mixed--integer nonlinear programs (MINLPs) under uncertainties that generalizes the $\Gamma$--uncertainties for mixed--integer linear programs (MIPs) in \cite{IntroGamma} and \cite{PriceRobust}. We focus on objective uncertainty: On the one hand, we provide reformulations, in particular for the case of nonlinear, concave and linear uncertainty. On the other hand, we show that the programs 
\begin{gather}\label{Form1}
\min_{x \in \mathcal{X}} u^TBg(x)
\end{gather}
underlying an 'assignment structure' and
\begin{gather}\label{Form2}
\min_{x \in \mathcal{X}} u^Tl(x)
\end{gather}
where $l$ is a $0/1$--function\footnote{We call a function $l\colon M \to N$ a $0/1$--function when $l(x) \in \{0,1\}$ for all $x \in M$.} and, in both cases, $u$ is a vector of uncertain parameters in $[\overline{u}, \overline{u} + \Delta u]$ can be solved by a polynomial number of oracle calls (With an oracle, we henceforth mean a black box that can solve the original program without uncertainties). To be more precise, we conduct the following:
\begin{enumerate}
\item Programs with concave uncertainty can be reformulated into programs including support functions and concave conjugations. If the uncertainty is linear, then support functions are sufficient.
\item A black box for solving the program without uncertainty is sufficient for solving programs either~\eqref{Form1} or~\eqref{Form2} under $\Gamma$--uncertainty with intervals. 
\item We propose a new model to handle deadline uncertainty with $\Gamma$--uncertainty sets and show its computational tractability.
\item Our model unifies several applications of $\Gamma$--uncertainty sets in the literature.
\end{enumerate}
In our appendix, we demonstrate the practical applicability of our presented reformulations for the quadratic assignment problem (QAP, \cite{QAPintro}) and a special case of the vehicle routing problem with general time windows (VRPGTW, \cite{VRPSTW}), both under $\Gamma$--uncertainty, in terms of a prototypical numerical study. Uncertainty in the constraints can be handled analogously and is also briefly discussed in the electronic companion. In total, our goal is to present a unifying framework for nonlinear optimization under uncertainty that is of interest for future research, in particular the combination of combinatorial optimization and nonlinear programming under uncertainty.\\

\paragraph*{Outline:}
The paper is structured as follows. In Section~\ref{sec:Rev}, we briefly revisit the the oracle--polynomial reformulation of the $\Gamma$--counterpart from \cite{IntroGamma} before we introduce the $\Gamma$--counterpart for MINLPs. We motivate our generalization with several applications. In Section~\ref{sec:Non_Linear_Objective_Functions}, we obtain reformulations of our robust counterparts and demonstrate our main results by showing and show that in special cases, oracle--polynomiality holds. In Section~\ref{Sec:Reformulations}, we present various examples and applications of our results with a focus on quadratic problems under uncertainty. Finally, in Section~\ref{Sec:Conclus}, we provide a conclusion and propose some interesting avenues for research. In our electric appendix, we demonstrate the case of uncertain constraints and a prototypical numerical study for the QAP and the VRPGTW under $\Gamma$--uncertainty. 

\paragraph*{Literature review:}
A first discussion of a program subject to uncertainty has been conducted by Soyster \cite{Soyster} for column--wise uncertainty of a constraint matrix where the uncertainty set is a convex set. To deal with the resulting over--conservatism, several approaches have been introduced in the literature. In \cite{ben-tal1} and \cite{ben-tal3}, the authors have presented and discussed linear programs under uncertainty from a theoretical and a practical point of view, focusing on convex/interval uncertainties, respectively. The case of reformulating convex/concave rather than linear functions under uncertainty has been addressed in \cite{ben-tal2}.  A framework for treating robust programming with reformulation approaches is \cite{RobustBook}. Another approach for treating uncertainties is the adversarial approach presented in \cite{ApplDec2} that, instead of reformulating, tries to iteratively add scenarios of the uncertainty to the nominal program, finds optimal solutions and checks whether the found solutions are already robust. The approaches are compared in \cite{BertsimasCutting}. For broad overviews of robust optimization in a theoretical and applied sense, we refer to the surveys \cite{survey2011}, \cite{survey2014} and \cite{SurveyAdj}. In the context of combinatorial/discrete programs under uncertainty, we reference \cite{Kouvelis_1997} as the first framework and the surveys \cite{RobCombOpt} and \cite{Kasperski2016} that tackle interval, discrete and convex uncertainties in the objective. $\Gamma$--uncertainties were introduced in \cite{PriceRobust} and applied to combinatorial programs in \cite{IntroGamma}.  
Throughout the last two decades, they have been generalized and extended. In \cite{BusingBand} and in \cite{Fischetti2009}, new uncertainty concepts, namely multi--band uncertainty and light robustness, based on $\Gamma$--uncertainties, have been introduced. In \cite{Poss12}, \cite{Poss14} and \cite{POSS2018}, uncertainty sets that generalized $\Gamma$--uncertainty sets were introduced. Furthermore, \cite{Buesing2019} discussed $\Gamma$--uncertainty sets in the context of global robust optimization and \cite{Goerigk2021} introduced locally budgeted uncertainty sets. Finally, $\Gamma$--uncertainty sets have also been applied in a dynamic robust sense, e.g. in \cite{ApplicationBudget}. Optimization under uncertainties including nonlinearities are fairly new in the literature. For an overview over theory, solution approaches and applications, we refer to \cite{NonlinearRO}. The reformulation techniques that our framework is based on have been derived in \cite{Fenchel}.

Finally, for our applications, we refer to the following sources. The QAP under uncertainty was discussed in \cite{QAP2015}, \cite{QAP2012} and \cite{QAM}. The VRPTGW under uncertainty is motivated by the patient transport problem described in \cite{Dummies} and based on the VRPGTW described in \cite{VRPSTW}. To the best of our knowledge, the aforementined $\Gamma$--uncertainty set for piecewise linear objective functions is new in the literature, although piecewise linear functions under uncertainty have been discussed e.g. in \cite{ApplPWL2013} and \cite{Gorissen_2013}.

\section{Our modeling framework} \label{sec:Rev}
\subsection{Revisiting $\Gamma$--uncertainties for binary programs}
Since our focus lies on reformulations of robust counterparts, we revisit a reformulation result of \cite{IntroGamma}. We consider a combinatorial
program with cost vector $\overline{c} \in \R^n$ and feasible set $\mathcal{X} \subseteq \{0,1\}^n$:
\begin{align}\label{Prob:NomComb}
\begin{split}
\min_{x \in \mathcal{X}} & \ \overline{c}^Tx. \\
\end{split}
\end{align} 
We assume that the cost coefficients are subject to interval uncertainty, i.e., $c_i \in [\overline{c}_i, \overline{c}_i + \Delta c_i]$ for a given $\Delta c_i \geq 0$ for all $i \in [n]:=\{1, \dots, n\}$. We aim to find solutions that are robust against at most $\Gamma \in [n]$ coefficients deviating from their nominal scenario $\overline{c}_i$: 
\begin{align}\label{Prob:Gamma_Comb}
\min_{x \in \mathcal{X}} \ \left\{ \overline{c}^Tx + \max_{\mathcal{S} \subseteq [n]: \left| \mathcal{S} \right| \leq \Gamma} \left\{ \sum_{i \in \mathcal{S}} \Delta c_i x_i \right\} \right\}.
\end{align} 
In \cite{IntroGamma}, Bertsimas and Sim have shown that the optimal solutions of program~\eqref{Prob:Gamma_Comb} can be found by applying
an optimization oracle of program~\eqref{Prob:NomComb}: 
\begin{prop}(\cite{IntroGamma}, Theorem 3)\label{Prop:Tractable_Gamma_Comb}
Assume that $\mathcal{X} \subseteq \{0,1\}^n$ and set $\Delta c_{0}:=0$.  Then program~\eqref{Prob:Gamma_Comb} is equivalent to 
\begin{align}\label{Prob:RobustCounterpartComb}
\min_{k \in [n]_0} \left\{ \Gamma \Delta c_k + \min_{x \in \mathcal{X}} \left\{\overline{c}^Tx + \sum_{j \in [n]} \max\{0,\Delta c_j - \Delta c_k\}x_j \right\} \right\}
\end{align}
where $[n]_0 := \{0, 1, \dots, n\}$.
\end{prop}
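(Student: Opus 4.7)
The plan is to start from the inner maximization in~\eqref{Prob:Gamma_Comb} and rewrite it as a linear program in continuous variables. For any fixed $x \in \mathcal{X} \subseteq \{0,1\}^n$, the problem
\begin{equation*}
\max_{\mathcal{S} \subseteq [n]: |\mathcal{S}| \leq \Gamma} \sum_{i \in \mathcal{S}} \Delta c_i x_i
\end{equation*}
is equivalent to $\max\{\sum_{i \in [n]} \Delta c_i x_i z_i : \sum_i z_i \leq \Gamma,\ 0 \leq z_i \leq 1\}$, because this LP has a totally unimodular constraint matrix (equivalently, because $\Delta c_i x_i \geq 0$ together with the $\Gamma$-cardinality constraint always admits a $0/1$ optimal basis). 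This is the only place where $\mathcal{X} \subseteq \{0,1\}^n$ and $\Delta c \geq 0$ really come in.

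Next, I would dualize this inner LP. Introducing dual variables $k \geq 0$ for the budget constraint $\sum_i z_i \leq \Gamma$ and $p_i \geq 0$ for $z_i \leq 1$, LP duality yields
\begin{equation*}
\max\nolimits_{z} \sum_i \Delta c_i x_i z_i \;=\; \min_{k \geq 0,\, p \geq 0} \Big\{ \Gamma k + \sum_i p_i \;:\; k + p_i \geq \Delta c_i x_i \text{ for all } i \Big\}.
\end{equation*}
Plugging this back into~\eqref{Prob:Gamma_Comb} turns the robust counterpart into a joint minimization over $x \in \mathcal{X}$, $k \geq 0$ and $p \geq 0$. For fixed $k$, the optimal $p_i$ is $\max\{0, \Delta c_i x_i - k\}$; using $x_i \in \{0,1\}$ (so that the factor $x_i$ can be pulled out of the $\max$), this simplifies to $\max\{0, \Delta c_i - k\}\, x_i$. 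Substituting gives
\begin{equation*}
\min_{k \geq 0} \Big\{ \Gamma k + \min_{x \in \mathcal{X}} \Big[ \overline{c}^T x + \sum_{j \in [n]} \max\{0, \Delta c_j - k\} x_j \Big] \Big\}.
\end{equation*}

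Finally, I would restrict the outer minimization from $k \geq 0$ to the finite set $\{0, \Delta c_1, \ldots, \Delta c_n\}$. The objective viewed as a function of $k$ is the pointwise minimum over $x \in \mathcal{X}$ of functions that are each convex and piecewise linear in $k$ with breakpoints contained in $\{\Delta c_1, \ldots, \Delta c_n\}$; since $\mathcal{X}$ is finite, the envelope is itself piecewise linear with breakpoints in the same finite set, and its minimum over $k \geq 0$ is attained at one of these breakpoints or at $k=0$. The convention $\Delta c_0 := 0$ then makes the optimal index range exactly $[n]_0$, yielding~\eqref{Prob:RobustCounterpartComb}. The main delicate step is the last one: one has to be careful that the objective is convex in $k$ (which follows from $k \mapsto \max\{0,\Delta c_j - k\}$ being convex) and that the breakpoint argument is valid uniformly over $x$, but once convexity and piecewise linearity with a common breakpoint set are established, reducing to a finite enumeration is immediate.
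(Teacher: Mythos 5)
Your proposal follows essentially the same route as the paper: the paper cites this result from Bertsimas and Sim, but its own proofs of the nonlinear generalizations (Lemma~\ref{Lem:ReModel} and Lemma~\ref{Rem:NoFenchel}) are exactly your argument --- replace the inner maximization by its exact LP relaxation, dualize, eliminate $p_i$ via $p_i^*=\max\{0,\Delta c_i x_i - k\}=\max\{0,\Delta c_i - k\}x_i$ using $x_i\in\{0,1\}$, and then reduce the continuous dual variable to a finite candidate set by piecewise linearity.

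One justification in your final step is not quite right, though the conclusion survives. You claim that the pointwise minimum over $x\in\mathcal{X}$ of convex piecewise linear functions with breakpoints in $\{\Delta c_1,\dots,\Delta c_n\}$ is itself piecewise linear with breakpoints in that same set. That is false in general: the lower envelope of piecewise linear functions acquires additional breakpoints at crossing points of the individual pieces (e.g.\ $\min\{k,1-k\}$ has a breakpoint at $k=\tfrac12$ even though neither function has one), and at such a crossing point the envelope can have a local --- even global --- minimum that lies in none of the original breakpoint sets. The correct and standard repair is to exchange the two minimizations: $\min_{k\ge 0}\min_{x\in\mathcal{X}} g_x(k)=\min_{x\in\mathcal{X}}\min_{k\ge 0} g_x(k)$, and for each \emph{fixed} $x$ the function $g_x(k)=\Gamma k+\overline{c}^Tx+\sum_j\max\{0,\Delta c_j-k\}x_j$ is convex, piecewise linear, and coercive (since $\Gamma\ge 1$), so its minimum over $k\ge 0$ is attained at $k=0$ or at one of its own breakpoints $\Delta c_j$. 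This is precisely how the paper argues the analogous step in the proof of Lemma~\ref{Rem:NoFenchel} (``either $\theta^*=0$ or $\theta^*=\theta^k(x)$ for one $k$''). With that one-line fix your proof is complete and matches the paper's approach.
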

Proposition~\ref{Prop:Tractable_Gamma_Comb} implies that program~\eqref{Prob:Gamma_Comb} can be solved to optimality in oracle--polynomial time, assuming that an oracle for program~\eqref{Prob:NomComb} is at hand. We note that it is crucial that $\mathcal{X} \subseteq\{0,1\}^n$. If $\mathcal{X} = \{x \in \Z^p \times \R^{n-p}: \ Ax \leq b, \ x \in [r,s] \}$ for $r,s \in \R^n$ and $p \in [n]$, then one can reformulate the robust counterpart as a computationally tractable MIP but has to introduce additional variables while losing the structure of the original program. Thus, the oracle for solving program~\eqref{Prob:Gamma_Comb} is in general not applicable. For details, we refer the reader to the proofs in \cite{IntroGamma}.

\subsection{Introduction of our model and applications} \label{Sec:General}
In this subsection, we extend  program~\eqref{Prob:Gamma_Comb} to MINLPs that are subject to uncertainty in the
objective. 
We consider the program
\begin{align}\label{Prob:GeneralNomOpt}
\begin{split}
\inf_{x \in \mathcal{X}} & \sum_{i \in [m]} \ \overline{f}_i(x),
\end{split} 
\end{align}
where $\overline{f}_i \colon \R^n \to \R$ is an arbitrary but fixed function for every $i \in [m]$ and $\mathcal{X} \subseteq \Z^p \times \R^{n-p}$ where $p \in [n]_0$. We assume that every function $\overline{f}_i$ is 'contaminated' by an uncertainty set $\mathcal{U}_i \subseteq \R^{L_i}$, i.e., we define 
$f_i \colon \R^n \times \mathcal{U}_i \to \R$
with $f_i(x,\overline{u}^i):= \overline{f}_i(x)$ for a nominal scenario $\overline{u}^i \in \mathcal{U}_i$ and $L_i$ is the dimension of the uncertain parameter $u^i \in \mathcal{U}_i$. We focus on the uncorrelated case, i.e., we assume that uncertainties of different functions $f_i$ are uncorrelated, i.e., the uncertainty set of $\sum_{i \in [m]} f_i(x, u^i)$ is $\mathcal{U} := \bigtimes_{i \in [m]} \mathcal{U}_i$. This ensures that the different uncertainty sets $\mathcal{U}_i$ have no influence on each other. Hence the robust counterpart of program~\eqref{Prob:GeneralNomOpt} without 'restricting' $\mathcal{U}$ further is
\begin{align}\label{Prob:GeneralStrictOpt}
\begin{split}
\inf_{x \in \mathcal{X}} & \sum_{i \in [m]} \ \sup_{u^i \in \mathcal{U}_i} f_i(x,u^i).
\end{split} 
\end{align} 
We say that, if $\mathcal{U}^i$ is convex and $f_i(x,\cdot): \mathcal{U}^i \to \R$ is concave for every $x \in \mathcal{X}$, then the uncertainty is concave. Furthermore, if there exists a function $l_i\colon \mathcal{X} \to \R^{L_i}$ with $f_i(x,u^i) = (u^i)^Tl_i(x)$ for every $x \in \mathcal{X}, u^i \in \mathcal{U}^i$, we call the uncertainty linear.
\\
To reduce over--conservatism, we aim to be robust against at most $\Gamma$ functions deviating from their nominal scenario and obtain the following:
\begin{alignat}{2}\label{Prob:GeneralGammaCounterpart}
\inf_{x \in \mathcal{X}} & \left\{ \sup_{\mathcal{S} \subseteq [m]: |\mathcal{S}| \leq \Gamma} \left\{ \sum_{i \in \mathcal{S}} \sup_{u^i \in \mathcal{U}_i} f_i(x,u^i) + \sum_{i \in [m] \setminus \mathcal{S}} f_i(x, \overline{u}^i) \right\} \right\}.
\end{alignat}
Program~\eqref{Prob:GeneralGammaCounterpart} is henceforth referred to as the $\Gamma$-counterpart of program~\eqref{Prob:GeneralNomOpt}. Naturally, it is more general than~\eqref{Prob:Gamma_Comb}. To demonstrate that this generalization is natural, we consider linear uncertainties: Assume that $f_i$ is subject to linear uncertainty for every $i \in [m]$ and that $\mathcal{U}^i$ is convex and compact. Then there exists $w^i \in \mathcal{U}^i$, such that $\sup_{u^i \in \mathcal{U}^i} f_i(x,u^i) = f_i(x,w^i) \in \R$. Then we obtain
\begin{alignat*}{2}
\eqref{Prob:GeneralGammaCounterpart}
&= \inf_{x \in \mathcal{X}}  \left\{ \sup_{\mathcal{S} \subseteq [m]: |\mathcal{S}| \leq \Gamma} \left\{ \sum_{i \in \mathcal{S}} f_i(x,w^i) + \sum_{i \in [m] \setminus \mathcal{S}} f_i(x, \overline{u}^i) \right\} \right\} \\
&= \inf_{x \in \mathcal{X}}  \left\{ \sup_{\mathcal{S} \subseteq [m]: |\mathcal{S}| \leq \Gamma} \left\{ \sum_{i \in \mathcal{S}} f_i(x,w^i) - f_i(x,\overline{u}^i) + \sum_{i \in [m]} f_i(x, \overline{u}^i) \right\} \right\} \\
&= \inf_{x \in \mathcal{X}}  \left\{ \sum_{i \in [m]} f_i(x, \overline{u}^i) + \sup_{\mathcal{S} \subseteq [m]: |\mathcal{S}| \leq \Gamma} \left\{ \sum_{i \in \mathcal{S}} f_i(x,w^i - \overline{u}^i)\right\} \right\} \\
&= \inf_{x \in \mathcal{X}}  \left\{ \sum_{i \in [m]} f_i(x, \overline{u}^i) + \sup_{\mathcal{S} \subseteq [m]: |\mathcal{S}| \leq \Gamma} \left\{ \sum_{i \in \mathcal{S}} \sup_{z^i \in \mathcal{U}_i - \overline{u}^i} f_i(x,z^i)\right\} \right\}.
\end{alignat*}
In particular, if $\mathcal{U}_i = [\overline{u}_i, \overline{u}_i + \Delta u_i] \subseteq \R_{\geq 0}$ and $f_i(x,u_i) = u_ix_i \geq 0$ for all $u_i \in \mathcal{U}_i$, then
\begin{gather*}
\sup_{z_i \in \mathcal{U}_i - \overline{u}_i} f_i(x,z_i) = \Delta u_i x_i
\end{gather*}
and one obtains program~\eqref{Prob:Gamma_Comb}. \\
Before we continue with our discussion for handling the $\Gamma$--counterpart in Section~\ref{sec:Non_Linear_Objective_Functions}, we present some application examples.
\paragraph{Single--machine scheduling under uncertainty}
This application was firstly discussed under uncertainty in \cite{RobustScheduling} and \cite{Tadayon2015}. A set of $m$ jobs $\mathcal{J}$ must be scheduled on a single machine. The machine requires a processing time $\overline{p}_j \in \R_{\geq 0}$ to finish job $j \in \mathcal{J}$ without preemption. The completion time of job $j$ that depends on the schedule $x$ and on the processing time $\overline{p}:=(\overline{p}_j)_{j \in \mathcal{J}}$ is denoted by $C_j(x,\overline{p})$. With $\mathcal{X}$, we denote the set of feasible schedules (the binary variable $x_{i,j}$ indicates whether job $j$ is scheduled at position $i$):
\begin{gather*}
\mathcal{X} := \left\{x \in \{0,1\}^{[m] \times |\mathcal{J}|}: \sum_{i \in [m]} x_{i,j} = 1 \ \forall j \in \mathcal{J}, \ \sum_{j \in \mathcal{J}} x_{i,j} = 1 \ \forall i \in [m] \right\}.
\end{gather*}
Assuming that every job $j$ contributes weight $w_j \in \R_{\geq 0}$
to the objective, we aim to minimize the total completion time:
\begin{gather}\label{Prob:WeightedScheduling}
\min_{x \in \mathcal{X}} \sum_{j \in \mathcal{J}} C_j(x,\overline{p}).
\end{gather} 
Program~\eqref{Prob:WeightedScheduling} is equivalent to
\begin{gather}\label{Prob:UnWeightedScheduling}
\min_{x \in \mathcal{X}} \sum_{j \in \mathcal{J}} \overline{p}_j \sum_{i \in [m]} (m+1-i)x_{i,j}.
\end{gather}
If the processing time $p_j$ is subject to
uncertainty $\mathcal{U}_j=[\overline{p}_j,
 \overline{p}_j + \Delta p_j]$ for each job $j \in \mathcal{J}$, then the $\Gamma$--counterpart~\eqref{Prob:GeneralGammaCounterpart} of program~\eqref{Prob:UnWeightedScheduling} is
\begin{gather}\label{Prob:GammaScheduling}
\min_{x \in \mathcal{X}} \left\{ \sum_{j \in \mathcal{J}} \overline{p}_j \sum_{i \in [m]} (m+1-i)x_{i,j} + \max_{\mathcal{S} \subseteq \mathcal{J}: |\mathcal{S}| \leq \Gamma} \left\{ \sum_{j \in \mathcal{S}} \Delta p_j \sum_{i \in [m]} (m+1-i)x_{i,j} \right\} \right\}
\end{gather}
since the uncertainty is linear. \\
While program~\eqref{Prob:GammaScheduling} looks very similar to
\eqref{Prob:Gamma_Comb}, there is an important difference: The variables $x_{i,j}$ are not multiplied with exactly one coefficient but each uncertain parameter $p_j$ is multiplied with a linear combination of the variables $x_{i,j}$ for fixed $j \in \mathcal{J}$. Thus, the objective of the nominal program~\eqref{Prob:UnWeightedScheduling} is linear in $p$ and
in $x$ but has the form $\min_{x \in \mathcal{X}} u^TBx$ for a real
matrix $B$ instead of $\min_{x \in \mathcal{X}} u^Tx$. Thus, Proposition 2.1 and the original results of \cite{IntroGamma} cannot be applied.
\paragraph{Quadratic assignment problem under uncertainty}
The QAP 
models the process of assigning $n \in \mathbb{N}$ facilities to $n$ locations such that the cost of transporting goods is minimized. 
With binary variables $x_{i,r}$, $i,r \in [n],$ that indicate whether facility $i$ is assigned to location $r$,
the feasible set can be modeled as 
\begin{gather*}
\mathcal{X} = \left\{x \in \{0,1\}^{[n]^2}: \sum_{i\in [n]} x_{i,r}= 1 \ \forall r \in [n], \sum_{r \in [n]} x_{i,r}=1 \ \forall i \in [n]\right\}.
\end{gather*}
For each pair of facilities $(i,j) \in [n]^2$, $c_{i,j} \geq 0$  denotes the flow between $i$ and $j$ and  for all pair of locations $(r,s) \in [n]^2$, $d_{r,s} \geq 0$ denotes the distance between $r$ and $s$. Thus, the QAP can be modeled with
\begin{gather*}
\min_{x\in \mathcal{X}} \sum_{(i,j,r,s) \in [n]^4} c_{i,j} d_{r,s}x_{i,r}x_{j,s}.
\end{gather*}  
In \cite{QAP2015}, the authors have assumed that the flow is subject to interval uncertainty. Their goal was to obtain solutions that are robust against at most $\Gamma$ deviations from the nominal scenario: We seek protection against uncertainties in $c_{i,j}$ that are modeled by a perturbation of at most $\Delta c_{i,j} \geq 0$ for all $(i,j)\in [n]^2$, i.e., $c_{i,j} \in \mathcal{U}_{i,j} = [\overline{c}_{i,j}, \overline{c}_{i,j}+ \Delta c_{i,j}]$. The $\Gamma$-counterpart~\eqref{Prob:GeneralGammaCounterpart} is, since the uncertainty is linear, given by:
\begin{align}\label{Prob:QAP_RobCP}
\begin{split}
&\min_{x\in \mathcal{X}} \left\{ \sum_{(i,j,r,s) \in [n]^4} \overline{c}_{i,j}d_{r,s}x_{i,r}x_{j,s} + \max_{\mathcal{S} \subseteq [n]^2:|\mathcal{S}|\le \Gamma} \left\{  \sum_{(i,j)\in \mathcal{S}}  \sum_{r,s\in [n]} \Delta \overline{c}_{i,j} d_{r,s}x_{i,r}x_{j,s} \right\} \right\} .
\end{split}
\end{align} 
\paragraph{Logistics with deadline uncertainties}
Problems occuring in the application of logistics involving deliveries within given due times can often be modeled as combinatorial programs with (non--)linear objective functions, e.g. taxi routing, delivery of goods or patient transport. For all three of these cases, being on time is important for customer satisfaction. At the same time, it is usually not problematic when vehicle arrives too early for a pick--up. \\
For tasks $i \in [m]$, we denote the due time with $b_i \in \R$.
If a job is finished after $b_i$, then penalty costs occur. A
program for (unweighted) penalty costs is 
\begin{align}\label{Prob:BoundNom}
\begin{split}
\inf_{x \in \mathcal{X}} & \ \sum_{i \in [m]} \max\{0,x_i - b_i\}.
\end{split}
\end{align} 
Program~\eqref{Prob:BoundNom} may arise in transportation logistics, for example as a special case of vehicle
routing problems with general time windows, see \cite{VRPSTW}. 
In practice, the due time can be uncertain: We assume that $b_i \in \mathcal{U}_i := [\overline{b}_i - \Delta b_i, \overline{b}_i]$ for some nominal scenario $\overline{b}_i$ and a perturbation $\Delta b_i$. 
To reduce conservatism, the objective is to ensure robustness against $\Gamma$ deviations of the due times, resulting in the following program:
\begin{align}\label{Prob:UncertainBounds}
\begin{split}
\inf_{x \in \mathcal{X}} & \ \left\{ \sup_{\mathcal{S} \subseteq [m]: |\mathcal{S}| \leq \Gamma} \left\{ \sum_{i \in \mathcal{S}} \max\{0,x_i - \overline{b}_i + \Delta b_i\} + \sum_{i \in [m] \setminus \mathcal{S}} \max\{0, x_i - \overline{b}_i\} \right\} \right\}. 
\end{split}
\end{align}
Problem~\eqref{Prob:UncertainBounds} is obtained from $\Gamma$--counterpart~\eqref{Prob:GeneralGammaCounterpart} by setting $f_i(x,b) := \max\{0, x_i - b_i\}$.

\section{Reformulations for programs with uncertain objectives} \label{sec:Non_Linear_Objective_Functions} 
In this section, we present equivalent reformulations for the
$\Gamma$--counterpart introduced in Section~\ref{Sec:General}. Several of the proofs are inspired by those in \cite{IntroGamma}. It turns out that it is possible to obtain first reformulations of $\Gamma$-- counterpart~\eqref{Prob:GeneralGammaCounterpart} without any assumptions on the functions $f_i$ or the uncertainty sets $\mathcal{U}^i$.  
\begin{lemma}\label{Lem:ReModel}
Let $\Gamma \in [m]$. Then $\Gamma$--counterpart~\eqref{Prob:GeneralGammaCounterpart} is equivalent to 
\begin{align}\label{Prob:GeneralReModel}
\begin{split}
\inf_{x,p,\theta} & \ \Gamma \theta + \sum_{i \in [m]} f_i(x,\overline{u}^i) + p_i, \\
\mathrm{s.t.\;} & x \in \mathcal{X}, \\
& p_i + \theta \geq \sup_{u^i \in \mathcal{U}_i} f_i(x,u^i) - f_i(x, \overline{u}^i) \ \forall i \in [m], \\
& p \in \R^m_{\geq 0}, \theta \in \R_{\geq 0}.
\end{split}
\end{align}
\end{lemma}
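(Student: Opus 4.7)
The plan is to fix $x \in \mathcal{X}$ and reformulate the inner worst-case selection problem as a small linear program whose LP dual matches the constraint structure of~\eqref{Prob:GeneralReModel}. After taking the dual and minimizing over $x$, we recover the claimed reformulation. Throughout, set
$$d_i(x) := \sup_{u^i \in \mathcal{U}_i} f_i(x,u^i) - f_i(x,\overline{u}^i), \qquad i \in [m].$$
Note that $d_i(x) \geq 0$ because $\overline{u}^i \in \mathcal{U}_i$, and the value $d_i(x) \in \R_{\geq 0} \cup \{+\infty\}$ is purely a function of $x$.

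First, I would rewrite the inner supremum in~\eqref{Prob:GeneralGammaCounterpart} by adding and subtracting $f_i(x,\overline{u}^i)$ on the ``selected'' indices, exactly as in the linear-uncertainty motivation above the lemma. This yields, for every fixed $x$,
$$\sup_{\mathcal{S} \subseteq [m], |\mathcal{S}| \leq \Gamma} \sum_{i \in \mathcal{S}} \sup_{u^i \in \mathcal{U}_i} f_i(x,u^i) + \sum_{i \in [m]\setminus \mathcal{S}} f_i(x,\overline{u}^i) \;=\; \sum_{i \in [m]} f_i(x,\overline{u}^i) + \max_{\substack{z \in \{0,1\}^m \\ \sum_i z_i \leq \Gamma}} \sum_{i \in [m]} d_i(x)\, z_i.$$
Since $d_i(x) \geq 0$, a greedy argument (or equivalently total unimodularity of the cardinality-constraint matrix) shows that the integrality restriction is redundant, so the combinatorial maximum coincides with the continuous relaxation $\max\{\sum_i d_i(x) z_i : 0 \leq z_i \leq 1, \sum_i z_i \leq \Gamma\}$.

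Next, I would take the LP dual of this relaxation. Associating $\theta \geq 0$ with $\sum_i z_i \leq \Gamma$ and $p_i \geq 0$ with $z_i \leq 1$, strong LP duality gives
$$\max_{\substack{0 \leq z \leq 1 \\ \sum_i z_i \leq \Gamma}} \sum_{i \in [m]} d_i(x)\, z_i \;=\; \min_{\substack{\theta \geq 0,\, p \geq 0 \\ \theta + p_i \geq d_i(x)\,\forall i}} \Gamma \theta + \sum_{i \in [m]} p_i.$$
(The primal is feasible and bounded whenever every $d_i(x)$ is finite; if some $d_i(x) = +\infty$, both the original $\Gamma$-counterpart at $x$ and the dual formulation take value $+\infty$, so the equivalence persists.) Plugging this dual expression back and minimizing jointly over $x \in \mathcal{X}$ and over the dual variables $(\theta,p)$ produces precisely~\eqref{Prob:GeneralReModel}, because the constraint $\theta + p_i \geq d_i(x)$ is exactly the constraint $p_i + \theta \geq \sup_{u^i \in \mathcal{U}_i} f_i(x,u^i) - f_i(x,\overline{u}^i)$.

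The only mildly subtle step is the exchange $\max = \min$ via LP duality together with the integrality argument for $z$; both are standard, but the non-negativity $d_i(x) \geq 0$ is what makes the LP relaxation tight and simultaneously justifies restricting the dual to $\theta, p \geq 0$. Once this is in place the rest is bookkeeping: rearranging the objective into $\Gamma\theta + \sum_i(f_i(x,\overline{u}^i) + p_i)$ and noting that the joint infimum over $(x,\theta,p)$ equals the infimum over $x$ of the inner minimum gives the lemma. \qed
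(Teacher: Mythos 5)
Your proposal is correct and follows essentially the same route as the paper's own proof: encode the choice of $\mathcal{S}$ by binary variables, observe that the resulting cardinality-constrained maximization equals its LP relaxation, and substitute the LP dual back into the outer minimization. You supply a few details the paper leaves implicit (the nonnegativity $d_i(x)\ge 0$, the explicit dual pairing, and the $+\infty$ case), but the argument is the same.
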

\begin{proof}
The structure of the proof is similar to the proof of Theorem 3 in \cite{IntroGamma}. With the binary variables
\begin{gather*}
s_i := \begin{cases} 1, \text{ if $i \in \mathcal{S}$,}\\ 
0, \text{ otherwise,} \\
\end{cases} 
\end{gather*}
$i \in [m]$, the inner maximization program of $\Gamma$--counterpart \eqref{Prob:GeneralGammaCounterpart} is equivalent to
\begin{align}\label{Prob:LinearOrdering}
\begin{split}
\sup_{s} & \ \sum_{i \in [m]} f_i(x, \overline{u}^i) + s_i(\sup_{u^i \in \mathcal{U}_i} f_i(x, u^i) - f_i(x, \overline{u}^i)), \\
\text{s.t.} & \ \sum_{i \in [m]} s_i \leq \Gamma, \\
& \ s \in \{0,1\}^m.
\end{split}
\end{align}
Clearly, program~\eqref{Prob:LinearOrdering} is equivalent to its LP
relaxation. Inserting its dual into $\Gamma$--counterpart~\eqref{Prob:GeneralGammaCounterpart} proves the claim.
\end{proof}
In Lemma~\ref{Lem:ReModel}, to obtain a tractable formulation, it is necessary to reformulate the inequality
\begin{gather}\label{Ineq:RobustNonCon}
p_i + \theta \geq \sup_{u^i \in \mathcal{U}_i} f(x,u^i) - f(x, \overline{u}^i)
\end{gather}
for all $i \in [m]$. In \cite{Fenchel}, the authors proposed various approaches, especially for linear/concave uncertainties which will be discussed in the subsequent subsections. We demonstrate one approach for the non--concave case that uses the notion of term--wise parallel vectors for a non--convex quadratic program in Section~\ref{Sec:Reformulations}. For other approaches, we refer to \cite{Fenchel} and \cite{NonlinearRO}. \\

Furthermore, one can reformulate program~\eqref{Prob:GeneralReModel} to obtain a program with feasible set $\mathcal{X}$ and without variables $p$ and $\theta$:
\begin{lemma}\label{Rem:NoFenchel}
If $\Gamma \in [m]$, then $\Gamma$--counterpart~\eqref{Prob:GeneralGammaCounterpart} is equivalent to 
\begin{align}\label{Prob:GammaCombCounterpartWithoutFenchel}
\begin{split}
\inf_{k \in [m]_0} & \left\{ \inf_{x \in \mathcal{X}} \left\{ \Gamma \theta^k(x) + \sum_{i \in [m]} f_i(x,\overline{u}^i) + \sup\{0, \theta^i(x) - \theta^k(x) \} \right\} \right\}, \\
\end{split}
\end{align}
where $\theta^k(x) := \sup_{u^k \in \mathcal{U}_k} f_k(x,u^k) - f_k(x,\overline{u}^k)$ and $\theta^0(x) := 0$. 
\end{lemma}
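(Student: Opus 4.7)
The plan is to begin with the reformulation in Lemma~\ref{Lem:ReModel} and eliminate the auxiliary variables $p$ and $\theta$ by exploiting the one-dimensional structure of the inner problem once $x$ is fixed. Since $\overline{u}^i \in \mathcal{U}_i$, we have $\theta^i(x) \geq 0$ for all $x$ and $i \in [m]$. For any fixed $(x,\theta)$, nonnegativity of $p_i$ combined with the constraint $p_i + \theta \geq \theta^i(x)$ and monotonicity of the objective in $p_i$ force $p_i^\star = \max\{0, \theta^i(x) - \theta\}$ at optimality, so Lemma~\ref{Lem:ReModel} reduces to
\[
\inf_{x \in \mathcal{X}} \left\{ \sum_{i \in [m]} f_i(x,\overline{u}^i) + \inf_{\theta \geq 0} \Bigl( \Gamma\theta + \sum_{i \in [m]} \max\{0, \theta^i(x) - \theta\} \Bigr) \right\}.
\]

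Next, I would show that for fixed $x$ the inner minimum in $\theta$ is attained at one of the values $\theta^k(x)$ with $k \in [m]_0$. The map $\theta \mapsto \Gamma\theta + \sum_i \max\{0, \theta^i(x) - \theta\}$ is convex and piecewise affine; ordering the $\theta^i(x)$'s as $\theta^{(1)}(x) \leq \cdots \leq \theta^{(m)}(x)$ and setting $\theta^{(0)}(x) := 0$, its slope on $[\theta^{(j)}(x), \theta^{(j+1)}(x)]$ equals $\Gamma - m + j$, which is non-decreasing in $j$. Hence the infimum is attained at some breakpoint $\theta = \theta^k(x)$, $k \in [m]_0$ (with $k=0$ covering the case $\Gamma \geq m$).

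Substituting $\theta = \theta^k(x)$ back gives $p_i = \max\{0, \theta^i(x) - \theta^k(x)\}$, and interchanging the outer enumeration over $k \in [m]_0$ with the infimum over $x \in \mathcal{X}$ yields exactly~\eqref{Prob:GammaCombCounterpartWithoutFenchel}. The main technical point is the reduction of the one-dimensional infimum in $\theta$ to a finite enumeration over breakpoints, which is driven purely by convexity and piecewise linearity and therefore requires no additional hypotheses on the $f_i$ or $\mathcal{U}_i$. In the degenerate case where some $\theta^i(x) = +\infty$, both sides of the claimed equivalence are $+\infty$ and the statement holds trivially.
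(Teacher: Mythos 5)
Your proof is correct and takes essentially the same route as the paper's: it eliminates $p_i$ via $p_i^{\star} = \max\{0,\theta^i(x)-\theta\}$ and then uses convexity and piecewise linearity in $\theta$ to restrict the inner infimum to the breakpoints $\theta^k(x)$, $k \in [m]_0$. Your explicit slope computation and the remark on the degenerate case merely flesh out the paper's terser version of the same argument.
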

\begin{proof}
Since $\Gamma \in [m]$, $\Gamma$--counterpart~\eqref{Prob:GeneralGammaCounterpart} is equivalent to \eqref{Prob:GeneralReModel}. Since for all $i \in [m]$, $p_i$ only occurs in exactly one inequality, we obtain
\begin{gather}\label{Eq:optimal_p}
p^*_i = \sup \{0, \sup_{u^i \in \mathcal{U}_i} f_i(x^*,u^i) - f_i(x^*, \overline{u}^i) )- \theta^* \} \ \forall i \in [m]
\end{gather}
for an optimal solution $(x^*,p^*,\theta^*)$ of~\eqref{Prob:GeneralReModel}. Inserting equation~\eqref{Eq:optimal_p} into the objective function of \eqref{Prob:GeneralReModel} results in 
\begin{align}\label{Eq:piecewiselinearlem}
 \Gamma \theta + \sum_{i \in [m]} f_i(x,\overline{u}^i) + \sup \{0, \sup_{u^i \in \mathcal{U}_i} f_i(x,u^i) - f_i(x, \overline{u}^i) ) - \theta \}.
\end{align}
Since $\eqref{Eq:piecewiselinearlem}$ is convex and piecewise linear in $\theta$, either $\theta^*=0$ or 
$\theta^* = \theta^k(x)$ 
for one $k \in [m]$.  
\end{proof}
\subsection{Concave Uncertainties} \label{subsec:Concave}
In this subsection, we will focus on programs that fulfill the following assumptions:
\begin{assumption}\label{Ass:General}
For $\Gamma$--counterpart~\eqref{Prob:GeneralGammaCounterpart} and for all $i \in [m]$ we assume:
\begin{itemize}
\item[(i)] There is a nominal scenario $\overline{u}^i \in \R^{L_i}$, a matrix $A^i \in \R^{L_i \times m_i}$ and a convex set $\mathcal{Z}_i \subseteq \R^{m_i}$, such that $
\mathcal{U}_i = \{\overline{u}^i + A^i \zeta^i\mid \ \zeta^i \in
\mathcal{Z}_i\}$, i.e., $\mathcal{U}_i$ is an affine transformation of a convex set (and thus, convex).
\item[(ii)] The uncertainty of $f_i$ is concave (we recall that this definition implies that $f_i(x,\cdot): \mathcal{U}^i \to \R$ is concave for every $x \in \mathcal{X}$).
\item[(iii)] The nominal scenario $\overline{u}^i$ is contained in the relative interior of $\mathcal{U}_i$.
\end{itemize}
\end{assumption}
Convexity of the uncertainty set (i) is a typical assumption in robust optimization, see \cite{RobustBook}. 
Assumptions (ii) and (iii) are required to apply the techniques from \cite{Fenchel}. To this end, we need some tools of convex analysis:
\begin{definition}
Let $\mathcal{X} \subseteq \R^n$ and $\mathcal{A} \subseteq \R^m$ be a convex set. Let $f \colon \mathcal{X} \times \mathcal{A} \to \R, \ (x,a) \mapsto f(x,a)$ be a function that is concave in $a$ for every $x \in \mathcal{X}$. For an arbitrary, but fixed $x \in \mathcal{X}$, the function
\begin{alignat*}{4}
f_{*}(x, \cdot) \colon & \R^n & \to & \ \R \cup \{-\infty\}, \\
& a & \mapsto & \inf_{y \in \mathcal{A}} \{a^Ty - f(x,y)\}
\end{alignat*}
is called the (partial) concave conjugate with respect to $a$. For a non--empty set $\mathcal{S} \subseteq \R^n$, the function
\begin{alignat*}{4}
\delta^*(\cdot \mid \mathcal{S})  \colon & \R^n & \to & \ \R \cup \{\infty\}, \\
& x & \mapsto & \sup_{y \in \mathcal{S}} y^Tx
\end{alignat*}
is called the support function of $\mathcal{S}$.
\end{definition}
The main result of \cite{Fenchel} is the following reformulation:
\begin{prop}(\cite{Fenchel}, Theorem 2)\label{Prop:GeneralPrinciple}
Under Assumption~\ref{Ass:General}, inequality
\begin{gather*}
\sup_{u^i \in \mathcal{U}_i} f_i(x,u^i) \leq 0
\end{gather*}
is satisfied if and only if there is a vector $v^i \in \R^{L_i}$, such that
\begin{gather}\label{Ineq:GeneralPrinciple}
\left(\overline{u}^i\right)^T v^i + \delta^*((A^i)^Tv^i \mid \mathcal{Z}_i) - f_{i,*}(x,v^i) \leq 0.
\end{gather}
\end{prop}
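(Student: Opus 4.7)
The plan is to fix $x \in \mathcal{X}$ throughout and to suppress the index $i$, so the task becomes: for a concave function $f(x,\cdot)$ on the convex set $\mathcal{U} = \{\overline{u} + A\zeta : \zeta \in \mathcal{Z}\}$ with $\overline{u}$ in its relative interior, show
\[
\sup_{u \in \mathcal{U}} f(x,u) \leq 0 \iff \exists\, v \in \R^L:\; \overline{u}^T v + \delta^*(A^T v \mid \mathcal{Z}) - f_*(x,v) \leq 0.
\]
I would first note that $\delta^*(v \mid \mathcal{U})$ admits the closed‑form decomposition $\delta^*(v \mid \mathcal{U}) = \overline{u}^T v + \delta^*(A^T v \mid \mathcal{Z})$, obtained by substituting $u = \overline{u} + A\zeta$ inside the support function. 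This identity is the bridge between the left- and right-hand sides.

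For the easy direction (sufficiency), I would invoke the Fenchel–Young inequality for the concave conjugate: by definition of $f_*(x,v) = \inf_u\{v^T u - f(x,u)\}$ we have $f(x,u) \leq v^T u - f_*(x,v)$ for every $u \in \R^L$. Taking the supremum over $u \in \mathcal{U}$ and using the decomposition of $\delta^*(v \mid \mathcal{U})$ immediately yields
\[
\sup_{u \in \mathcal{U}} f(x,u) \leq \overline{u}^T v + \delta^*(A^T v \mid \mathcal{Z}) - f_*(x,v),
\]
so any $v$ satisfying~\eqref{Ineq:GeneralPrinciple} certifies $\sup_{u \in \mathcal{U}} f(x,u) \leq 0$. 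No regularity assumption is needed here.

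For the converse, I would pass to the $\zeta$-parametrization $\sup_{u \in \mathcal{U}} f(x,u) = \sup_{\zeta \in \mathcal{Z}} f(x,\overline{u} + A\zeta)$ and apply the biconjugate (Fenchel–Moreau) identity $f(x,u) = \inf_v \{v^T u - f_*(x,v)\}$, which is legitimate since, under Assumption~\ref{Ass:General}~(i)–(ii), $f(x,\cdot)$ is a proper concave function whose hypograph is closed on $\mathcal{U}$ and whose relative interior is nonempty by (iii). This rewrites the left-hand side as a sup–inf. Swapping the supremum and infimum then converts it to
\[
\inf_v \bigl\{\overline{u}^T v + \delta^*(A^T v \mid \mathcal{Z}) - f_*(x,v)\bigr\},
\]
after evaluating the inner $\sup_\zeta\{v^T A \zeta : \zeta \in \mathcal{Z}\}$ in closed form. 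If this infimum is $\leq 0$, some $v$ attains a value $\leq 0$ (or, if the infimum is not attained, a minimizing sequence produces $v$'s with objective value arbitrarily close to, and for our purposes sufficiently below, $0$), which is exactly what is required.

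The main obstacle is justifying the sup–inf exchange: without a constraint qualification, the dual bound can be strictly loose and the "only if" direction fails. The cleanest route is to invoke Fenchel–Rockafellar strong duality for the sum of the two concave functions $u \mapsto f(x,u)$ and $u \mapsto \chi_{\mathcal{U}}(u)$ (the concave indicator). Its standard hypothesis demands that the relative interiors of the effective domains meet, which is supplied precisely by Assumption~\ref{Ass:General}~(iii) placing $\overline{u}$ in the relative interior of $\mathcal{U}$, together with the concavity and convex-domain properties in (i)–(ii). Once strong duality is in hand, the remainder of the argument is bookkeeping with support functions and concave conjugates.
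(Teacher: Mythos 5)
This proposition is not proved in the paper at all: it is imported verbatim from the reference \cite{Fenchel} (Theorem~2 there), so there is no in-paper argument to compare against. What you have written is essentially a reconstruction of the standard proof from that reference: the decomposition $\delta^*(v\mid\mathcal{U})=\overline{u}^Tv+\delta^*(A^Tv\mid\mathcal{Z})$, Fenchel--Young for sufficiency, and Fenchel--Rockafellar duality between the concave function $f(x,\cdot)$ and the indicator of $\mathcal{U}$ for necessity, with Assumption~\ref{Ass:General}(iii) serving as the relative-interior constraint qualification. That is the right argument and it goes through. Two small points of hygiene. First, your parenthetical fallback via a minimizing sequence would \emph{not} rescue the ``only if'' direction: if $\sup_{u\in\mathcal{U}}f(x,u)=0$ and the dual infimum equalled $0$ without being attained, every candidate $v$ would give a value strictly above $0$ and no certificate would exist. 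You must therefore rely on dual attainment, which Rockafellar's Fenchel duality theorem does supply under exactly the relative-interior condition you invoke --- so the theorem you cite closes the gap, but the sequence argument should be deleted rather than offered as an alternative. Second, your appeal to the biconjugate identity together with the claim that the hypograph of $f(x,\cdot)$ is closed is not warranted by Assumption~\ref{Ass:General} (no semicontinuity is assumed) and is also unnecessary: the relative-interior form of Fenchel duality requires only properness and the domain condition, not closedness, so the cleaner route you name at the end is the one to keep.
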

We note that convexity of $f_i$ in $x$ implies that the left-hand side of inequality~\eqref{Ineq:GeneralPrinciple} is also convex, since $\delta^*$ is convex in $v^i$ and $f_{i,*}$ is concave in $(x,v^i)$, see \cite{Fenchel}.\\ By applying Proposition~\ref{Prop:GeneralPrinciple} to
program~\eqref{Prob:GeneralReModel}, we obtain an equivalent reformulation of $\Gamma$--counterpart~\eqref{Prob:GeneralGammaCounterpart}: 
\begin{corollary}\label{Prop:FenchelGammaCounterpart}
Let $\Gamma \in [m]$. Under Assumption~\ref{Ass:General}, $\Gamma$--counterpart~\eqref{Prob:GeneralGammaCounterpart} is equivalent to 
\begin{align}\label{Prob:FenchelGammaCounterpart}
\begin{split}
\inf_{x,p,\theta,v^1, \dots, v^m} & \ \Gamma \theta + \sum_{i \in [m]} f_i(x,\overline{u}^i) + p_i \\
\mathrm{s.t.}\; & x \in \mathcal{X} \\
& p_i + \theta \geq (\overline{u}^i)^Tv^i + \support{i} - f_{i,*}(x,v^i) - f_i(x, \overline{u}^i) \ \forall i \in [m] \\
& p, \theta \geq 0.
\end{split}
\end{align}
\end{corollary}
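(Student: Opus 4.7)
\proof{Proof Sketch.}
The plan is to start from Lemma~\ref{Lem:ReModel}, which already gives the equivalence of the $\Gamma$--counterpart~\eqref{Prob:GeneralGammaCounterpart} with program~\eqref{Prob:GeneralReModel}. It then suffices to show that, under Assumption~\ref{Ass:General}, each robustified constraint
\begin{gather*}
p_i + \theta \geq \sup_{u^i \in \mathcal{U}_i} f_i(x,u^i) - f_i(x, \overline{u}^i)
\end{gather*}
can be replaced by the corresponding constraint in program~\eqref{Prob:FenchelGammaCounterpart} via Proposition~\ref{Prop:GeneralPrinciple}, with $v^i$ turned into a decision variable.

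The first step is to rewrite the above inequality as $\sup_{u^i \in \mathcal{U}_i} \tilde{f}_i(x,u^i) \leq 0$, where
\begin{gather*}
\tilde{f}_i(x,u^i) := f_i(x,u^i) - f_i(x,\overline{u}^i) - p_i - \theta.
\end{gather*}
Since $f_i$ is concave in $u^i$ by Assumption~\ref{Ass:General}(ii) and the subtracted terms do not depend on $u^i$, the function $\tilde{f}_i$ is concave in $u^i$ as well; the uncertainty set $\mathcal{U}_i$ and the relative-interior condition on $\overline{u}^i$ are inherited unchanged. Thus Assumption~\ref{Ass:General} holds for $\tilde{f}_i$, and Proposition~\ref{Prop:GeneralPrinciple} applies. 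It yields the existence of some $v^i \in \R^{L_i}$ such that
\begin{gather*}
(\overline{u}^i)^T v^i + \delta^*((A^i)^T v^i \mid \mathcal{Z}_i) - \tilde{f}_{i,*}(x,v^i) \leq 0.
\end{gather*}

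The second step is to express $\tilde{f}_{i,*}$ through $f_{i,*}$. From the definition of the concave conjugate,
\begin{gather*}
\tilde{f}_{i,*}(x,v^i) = \inf_{y}\left\{(v^i)^T y - f_i(x,y) + f_i(x,\overline{u}^i) + p_i + \theta\right\} = f_{i,*}(x,v^i) + f_i(x,\overline{u}^i) + p_i + \theta.
\end{gather*}
Substituting this identity into the inequality above and rearranging gives exactly the constraint in program~\eqref{Prob:FenchelGammaCounterpart}. The existential quantifier on $v^i$ is absorbed by promoting $v^i$ to a decision variable of the outer minimization, which preserves equivalence because each $v^i$ appears in exactly one constraint and never in the objective.

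The main subtlety I would expect lies not in the algebra but in checking that Proposition~\ref{Prop:GeneralPrinciple} may legitimately be invoked with the shifted function $\tilde{f}_i$: this requires verifying that the shift by $-f_i(x,\overline{u}^i) - p_i - \theta$ (which depends on $x$, $p_i$ and $\theta$ but not on $u^i$) preserves concavity in $u^i$ and leaves the uncertainty-set assumptions intact, so that the reformulation holds pointwise for every admissible triple $(x,p_i,\theta)$. Once this is argued, combining the reformulated constraints with the objective of program~\eqref{Prob:GeneralReModel} yields program~\eqref{Prob:FenchelGammaCounterpart} and completes the equivalence.
\endproof
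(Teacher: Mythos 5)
Your proposal is correct and follows the same route as the paper: reduce to program~\eqref{Prob:GeneralReModel} via Lemma~\ref{Lem:ReModel} and then apply Proposition~\ref{Prop:GeneralPrinciple} constraint--wise. The only difference is that you make explicit the shift $\tilde f_i = f_i - f_i(\cdot,\overline{u}^i) - p_i - \theta$ and the resulting translation of the concave conjugate, details the paper's one--line proof leaves implicit.
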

\begin{proof}
Since $\Gamma$ is integral, Lemma~\ref{Lem:ReModel} holds. Proposition~\ref{Prop:GeneralPrinciple} implies that for each $i
\in [m]$, 
\begin{gather*}
p_i + \theta \geq ({\overline{u}^i})^Tv^i + \support{i} - f_{i,*}(x,v^i) - f_i(x, \overline{u}^i)
\end{gather*}
is satisfied for $v \in \R^{L_i}$ if and only if 
$p_i + \theta \geq \sup_{u^i \in \mathcal{U}_i} f_i(x,u^i) - f_i(x, \overline{u}^i).$
\end{proof}
In general, program~\eqref{Prob:FenchelGammaCounterpart} is not compuationally tractable. The support function and the concave conjugate with respect to $v^i$ are optimization programs themselves that depend on a new decision variable $v^i$. In \cite{Fenchel}, the authors derived finite reformulations for various uncertainty sets $\mathcal{U}_i$, including geometries like ellipsoids, polyhedra, cones, boxes, Minkowski sums or their intersections and uncertainty sets that are described by various functions, e.g. convex functions or separable functions and. Their findings can also be applied here. For details, we refer to Tables 1, 2 and 3 in \cite{Fenchel}. 

\subsection{Linear Uncertainties} \label{subsec:Linear}
During the last decades, research has been focused on linear uncertainties and they are well--studied. Thus, in this subsection, we will show how one can deal with linear uncertainties in the context of MINLPs under uncertainty, noting that many combinatorial programs are dealing with linear uncertainty (as we will also demonstrate in Section~\ref{Sec:Reformulations}):
\begin{assumption}\label{Ass:GeneralLinear}
For $\Gamma$--counterpart~\eqref{Prob:GeneralGammaCounterpart} and for all $i \in [m]$, Assumption~\ref{Ass:General} holds with the following modification:
\begin{itemize}
\item[$\text{(ii)}^*$] The uncertainty is linear, i.e., there exists a function $l_i \colon \mathcal{X} \to \R^{L_i}$ such that 
\begin{gather*}
f_i(x,u^i) = (u^i)^Tl_i(x)\  \forall u^i \in \mathcal{U}_i, \ x \in \mathcal{X}.
\end{gather*}
\end{itemize}
\end{assumption}
We start by formulating Corollary~\ref{Prop:FenchelGammaCounterpart} under Assumption~\ref{Ass:GeneralLinear}:
\begin{corollary}\label{Prop:FenchelGammaCounterpartLinear}
Let $\Gamma \in [m]$. Under Assumption~\ref{Ass:GeneralLinear}, $\Gamma$--counterpart~\eqref{Prob:GeneralGammaCounterpart} is equivalent to 
\begin{align}\label{Prob:FenchelGammaCounterpartLin}
\begin{split}
\inf_{x,p,\theta} & \ \Gamma \theta + \sum_{i \in [m]} (\overline{u}^i)^Tl_i(x) + p_i, \\
\mathrm{s.t.}\; & x \in \mathcal{X}, \\
& p_i + \theta \geq  \supportx{i} \ \forall i \in [m], \\
& p, \theta \geq 0.
\end{split}
\end{align}
\end{corollary}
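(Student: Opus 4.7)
My plan is to derive Corollary~\ref{Prop:FenchelGammaCounterpartLinear} as a specialization of Corollary~\ref{Prop:FenchelGammaCounterpart}. Since Assumption~\ref{Ass:GeneralLinear} differs from Assumption~\ref{Ass:General} only in strengthening concavity of $f_i(x,\cdot)$ to linearity, every hypothesis of Corollary~\ref{Prop:FenchelGammaCounterpart} is satisfied (linear functions are in particular concave), so we may start from the reformulation~\eqref{Prob:FenchelGammaCounterpart}. The task then reduces to simplifying that reformulation under $f_i(x,u^i) = (u^i)^T l_i(x)$ and, crucially, eliminating the auxiliary variables $v^i$.

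The key calculation is the partial concave conjugate of $f_i$ with respect to the uncertainty argument. With $f_i(x,y) = y^T l_i(x)$, we get
\begin{align*}
f_{i,*}(x, v^i) \;=\; \inf_{y \in \R^{L_i}} \bigl\{ (v^i)^T y - y^T l_i(x) \bigr\} \;=\; \inf_{y \in \R^{L_i}} (v^i - l_i(x))^T y,
\end{align*}
which equals $0$ if $v^i = l_i(x)$ and $-\infty$ otherwise. Substituting $f_i(x,\overline{u}^i) = (\overline{u}^i)^T l_i(x)$ into the constraint of~\eqref{Prob:FenchelGammaCounterpart} yields
\begin{align*}
p_i + \theta \;\geq\; (\overline{u}^i)^T v^i + \delta^*((A^i)^T v^i \mid \mathcal{Z}_i) - f_{i,*}(x,v^i) - (\overline{u}^i)^T l_i(x).
\end{align*}
For $v^i \neq l_i(x)$ the right-hand side is $+\infty$ and the constraint becomes infeasible, while for $v^i = l_i(x)$ the $(\overline{u}^i)^T v^i$ term cancels against $-(\overline{u}^i)^T l_i(x)$ and $f_{i,*}$ vanishes, leaving exactly $\delta^*((A^i)^T l_i(x)\mid \mathcal{Z}_i)$ on the right-hand side. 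Since program~\eqref{Prob:FenchelGammaCounterpart} minimizes over $v^i$ (and the other variables benefit from a smaller right-hand side via $p_i$), the forced choice $v^i = l_i(x)$ is without loss of generality and the variables $v^i$ may be projected out, producing the constraint in~\eqref{Prob:FenchelGammaCounterpartLin}.

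It remains to check that the objective of~\eqref{Prob:FenchelGammaCounterpart} becomes $\Gamma\theta + \sum_{i \in [m]}(\overline{u}^i)^T l_i(x) + p_i$, which is immediate from $f_i(x,\overline{u}^i) = (\overline{u}^i)^T l_i(x)$. Combining the simplified objective and the simplified constraints gives precisely~\eqref{Prob:FenchelGammaCounterpartLin}, and equivalence with the $\Gamma$-counterpart~\eqref{Prob:GeneralGammaCounterpart} then follows by transitivity through Corollary~\ref{Prop:FenchelGammaCounterpart}.

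The only mildly delicate step is the elimination of $v^i$: one must be explicit that $-f_{i,*}(x,v^i) = +\infty$ off the hyperplane $\{v^i = l_i(x)\}$ renders those choices infeasible, so that minimizing over $v^i$ is equivalent to hard-coding $v^i = l_i(x)$. Everything else is a mechanical substitution and cancellation, so I do not anticipate any obstacle beyond writing the concave-conjugate computation cleanly.
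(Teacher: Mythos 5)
Your proposal is correct and follows essentially the same route as the paper: invoke Corollary~\ref{Prop:FenchelGammaCounterpart}, observe that linearity forces $f_{i,*}(x,v^i)=-\infty$ unless $v^i=l_i(x)$ so that the constraint pins down $v^i$, and then substitute to cancel $(\overline{u}^i)^Tv^i$ against $f_i(x,\overline{u}^i)=(\overline{u}^i)^Tl_i(x)$. The only difference is that you compute the concave conjugate explicitly where the paper cites \cite{Fenchel} for that fact, which is a harmless (indeed welcome) addition of detail.
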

\begin{proof}
Since $\Gamma$ is integral, Corollary~\ref{Prop:FenchelGammaCounterpart} holds. Since the uncertainty is linear, we have
\begin{gather*}
f_{i,*}(x,v^i) \neq -\infty \Leftrightarrow v^i = l_i(x),
\end{gather*} 
see \cite{Fenchel}. Since inequality~\eqref{Ineq:GeneralPrinciple} is naturally not fulfilled for $f_{i,*}(x,v^i) = - \infty$, $v^i = l_i(x)$ holds. Inserting this into program~\eqref{Prob:FenchelGammaCounterpart} proves the claim, since $f_i(x, \overline{u}^i) = (\overline{u}^i)^Tl_i(x)$.
\end{proof}
 
\begin{remark}
We note that naturally, similar to $\Gamma$--counterpart~\eqref{Prob:GeneralGammaCounterpart} being a generalization of program~\eqref{Prob:Gamma_Comb}, Corollary~\ref{Prop:FenchelGammaCounterpartLinear} is a generalization of Theorem~1 in \cite{IntroGamma}.
\end{remark}

For combinatorial optimization under interval uncertainty, it is usually essential to obtain a tractable reformulation for which the feasible set is not altered, as oracles for program~\eqref{Prob:GeneralNomOpt} can then be used to solve the resp. $\Gamma$--counterpart. In the case of linear uncertainty, this can be achieved by adding the additional assumption of $l_i$, $i \in [m]$, being non--negative on $\mathcal{X}$ and altering $\mathcal{X}$:

\begin{theorem}\label{Cor:LinInt}
Let $\Gamma \in [m]$ and consider the $\Gamma$--counterpart~\eqref{Prob:GeneralGammaCounterpart} under interval uncertainty $\mathcal{U}_k = [\overline{u}_k, \overline{u}_k + \Delta u_k]$ for some $\overline{u}_k, \Delta u_k \in \R_{\geq 0}^{L_k}$ for all $k \in [m]$. If the uncertainty is linear with $f(x,u_k) = u_k^Tl_k(x)$ and $l_k(x) \geq 0$ holds for all $x \in \mathcal{X}$ and all $k \in [m]$, then program~\eqref{Prob:GeneralGammaCounterpart} is equivalent to   
\begin{align}\label{Prob:CombinatorialGammaCounterPartWithoutDualLinear}
\begin{split}
\inf_{k \in [m]_0} & \left\{ \inf_{\mathcal{Q} \subseteq [m]} \left\{ \inf_{x \in \mathcal{X_\mathcal{Q}}} \left\{ \Gamma \Delta u_k^T l_k(x) + \sum_{i \in [m]} \overline{u}_i^Tl_i(x) + \sum_{q \in \mathcal{Q}} \Delta u_q^T l_q(x) - \Delta u_k^T l_k(x) \right\} \right\} \right\},
\end{split}
\end{align}
with $\Delta u_0 := l_0(x) := 0$ and 
\begin{gather*}
\mathcal{X}_\mathcal{Q} := \{x \in \mathcal{X}: \Delta u_q^T l_q(x) - \Delta u_k^T l_k(x) \geq 0 \ \forall q \in \mathcal{Q}, \ \Delta u_q^T l_q(x) - \Delta u_k^T l_k(x) \leq 0 \ \forall q \in [m] \setminus \mathcal{Q}\}.
\end{gather*}
\end{theorem}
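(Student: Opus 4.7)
The plan is to build on Lemma~\ref{Rem:NoFenchel}, which, since $\Gamma \in [m]$, already reformulates the $\Gamma$--counterpart as an outer infimum over $k \in [m]_0$ of an inner program in $x$ whose objective is
$\Gamma\theta^{k}(x) + \sum_{i \in [m]}\bigl(f_{i}(x,\overline{u}^{i}) + \sup\{0,\theta^{i}(x)-\theta^{k}(x)\}\bigr)$.
The proof then has two substantive steps: evaluate $\theta^{k}(x)$ explicitly using the linearity and nonnegativity hypotheses, and linearize the remaining $\max$--expressions by partitioning $\mathcal{X}$ according to the sign pattern of $\Delta u_{q}^{T}l_{q}(x)-\Delta u_{k}^{T}l_{k}(x)$.

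\textbf{Step 1 (evaluating $\theta^{k}$).} Since $f_{k}(x,u_{k}) = u_{k}^{T}l_{k}(x)$ with $l_{k}(x)\geq 0$, the supremum of the linear map $u_{k}\mapsto u_{k}^{T}l_{k}(x)$ over the box $\mathcal{U}_{k}=[\overline{u}_{k},\overline{u}_{k}+\Delta u_{k}]$ is attained at the upper corner $u_{k}=\overline{u}_{k}+\Delta u_{k}$, so $\theta^{k}(x) = \Delta u_{k}^{T}l_{k}(x)$. The convention $\Delta u_{0} := l_{0}(x) := 0$ is consistent with the definition $\theta^{0}(x)=0$ in Lemma~\ref{Rem:NoFenchel}. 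Also $f_{i}(x,\overline{u}^{i}) = \overline{u}_{i}^{T}l_{i}(x)$. Substituting these identities into the objective reduces it to
$\Gamma\Delta u_{k}^{T}l_{k}(x) + \sum_{i\in[m]}\overline{u}_{i}^{T}l_{i}(x) + \sum_{i\in[m]}\max\bigl\{0,\Delta u_{i}^{T}l_{i}(x)-\Delta u_{k}^{T}l_{k}(x)\bigr\}$.

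\textbf{Step 2 (partition by sign pattern).} For fixed $k$, assign to each $x \in \mathcal{X}$ the set $\mathcal{Q}(x,k):=\{q\in[m]:\Delta u_{q}^{T}l_{q}(x)\geq \Delta u_{k}^{T}l_{k}(x)\}$; then $x\in\mathcal{X}_{\mathcal{Q}(x,k)}$. Conversely, whenever $x \in \mathcal{X}_{\mathcal{Q}}$, the defining inequalities of $\mathcal{X}_{\mathcal{Q}}$ force $\max\{0,\Delta u_{q}^{T}l_{q}(x)-\Delta u_{k}^{T}l_{k}(x)\} = \Delta u_{q}^{T}l_{q}(x)-\Delta u_{k}^{T}l_{k}(x)$ for $q\in\mathcal{Q}$ and $=0$ for $q\notin\mathcal{Q}$. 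Thus on each piece $\mathcal{X}_{\mathcal{Q}}$ the nonlinear objective coincides with the affine surrogate displayed in the theorem, and since $\mathcal{X} = \bigcup_{\mathcal{Q}\subseteq[m]}\mathcal{X}_{\mathcal{Q}}$, swapping the inner infimum for an infimum over $\mathcal{Q}$ and then over $\mathcal{X}_{\mathcal{Q}}$ yields program~\eqref{Prob:CombinatorialGammaCounterPartWithoutDualLinear}.

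\textbf{Main obstacle.} The partitioning argument is only delicate at equality boundaries, where $\Delta u_{q}^{T}l_{q}(x)=\Delta u_{k}^{T}l_{k}(x)$ holds for some $q$ and a single $x$ lies in several pieces $\mathcal{X}_{\mathcal{Q}}$. This is harmless because on such boundaries both $\max\{0,\Delta u_{q}^{T}l_{q}(x)-\Delta u_{k}^{T}l_{k}(x)\}$ and the affine substitute $\Delta u_{q}^{T}l_{q}(x)-\Delta u_{k}^{T}l_{k}(x)$ vanish, so whether one counts $q$ inside $\mathcal{Q}$ or outside, the objective evaluated at $x$ is the same. Hence the collection $\{\mathcal{X}_{\mathcal{Q}}\}_{\mathcal{Q}\subseteq[m]}$ covers $\mathcal{X}$, the two objectives agree on every overlap, and the infimum is preserved when passing from $\mathcal{X}$ to the double infimum over $\mathcal{Q}$ and $\mathcal{X}_{\mathcal{Q}}$.
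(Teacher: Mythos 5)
Your proof is correct, and it reaches the same intermediate form as the paper --- the minimum over $k\in[m]_0$ of an inner program whose objective contains the terms $\max\{0,\Delta u_i^Tl_i(x)-\Delta u_k^Tl_k(x)\}$, followed by the observation that $\mathcal{Q}$ encodes the sign pattern of these terms --- but you get there by a cleaner route. The paper first enlarges each interval to $\mathcal{U}_k^\varepsilon=[\overline{u}_k-\varepsilon e^{L_k},\overline{u}_k+\Delta u_k]$ (harmless since $l_k\geq 0$ keeps the supremum at the upper corner) so that the nominal scenario lies in the relative interior of the uncertainty set; this is needed because the paper routes the argument through Corollary~\ref{Prop:FenchelGammaCounterpart}, whose Fenchel--duality reformulation rests on Assumption~\ref{Ass:General}(iii). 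You instead invoke Lemma~\ref{Rem:NoFenchel} directly, which requires no assumption on the $f_i$ or $\mathcal{U}_i$ beyond $\Gamma\in[m]$, and then compute $\theta^k(x)=\Delta u_k^Tl_k(x)$ by hand from linearity and nonnegativity of $l_k$. This bypasses the support--function machinery and the $\varepsilon$--perturbation entirely, at no loss of generality. Your Step~2 is also more explicit than the paper's one--line remark that ``$\mathcal{Q}$ encodes which maximization terms are non--negative'': you verify both inclusions of the double infimum and address the overlap of the pieces $\mathcal{X}_{\mathcal{Q}}$ at equality boundaries, where both the max--objective and its affine surrogate agree, so the infimum is preserved. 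The one thing worth stating explicitly is that the elimination of $p$ and $\theta$ in Lemma~\ref{Rem:NoFenchel} presupposes $\sup_{u^i\in\mathcal{U}_i}f_i(x,u^i)<\infty$, which holds here because the intervals are compact and the uncertainty is linear.
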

\begin{proof}
Let $e^j$ be the vector of only ones in $\R^j$. Since $l_k$ is non--negative, $\Gamma$--counterpart~\eqref{Prob:GeneralGammaCounterpart} does not change when one replaces $\mathcal{U}_k$ with $\mathcal{U}^\varepsilon_k := [\overline{u}_k - \varepsilon e^{L_k}, \overline{u}_k + \Delta u_k]$ for any $\varepsilon > 0$ and Assumption~\ref{Ass:GeneralLinear} holds. Thus, $\Gamma$--counterpart~\eqref{Prob:GeneralGammaCounterpart} is equivalent to \eqref{Prob:FenchelGammaCounterpart}. Analogously to the proof of Lemma~\ref{Rem:NoFenchel}, one can show that
\begin{gather}\label{Eq:lin_optimal_p}
p^*_i = \max \{0, \supportx{i} - \theta^* \}
\end{gather}
and that $\theta^*$ equals $0$ or there exists $k \in [m]$ such that
\begin{gather}\label{Eq:lin_OptimalTheta}
\theta^* = \supportx{k} = \Delta u_k^Tl_k(x)
\end{gather}
for optimal $p^*$ and $\theta^*$. Note that the last equation in \eqref{Eq:lin_OptimalTheta} holds since $l_k$ is non--negative.
 Thus, $\Gamma$--counterpart~\eqref{Prob:GeneralGammaCounterpart} is equivalent to 
\begin{alignat*}{2}
\inf_{k \in [m]_0} & \left\{ \inf_{\substack{x \in \mathcal{X}}} \left\{ \Gamma u_k^Tl_k(x) + \sum_{i \in [m]} (\overline{u}^i)^Tl_i(x) + \max \{0, \Delta u_i^Tl_i(x) - \Delta u_k^Tl_k(x)\} \right\} \right\}.
\end{alignat*}
This proves the claim since $\mathcal{Q}$ encodes which maximization terms are non--negative.
\end{proof}
Theorem~\ref{Cor:LinInt} states that the $\Gamma$--counterpart under said theorem's assumptions can be 'almost' solved by an optimization oracle for solving~\eqref{Prob:GeneralNomOpt}, assuming that one can extend the oracle from $\mathcal{X}$ to $\mathcal{X}_\mathcal{Q}$. However, the number of calls is in $O(m2^m)$, i.e., exponential in the number of uncertain functions (even if $m$ is part of the input). To the best of our knowledge, there is no 'black box' that can go from $\mathcal{X}$ to $\mathcal{X}_\mathcal{Q}$ in the nonlinear combinatorial context which would certainly be an interesting research avenue. Thus, to be able to solve the $\Gamma$--counterpart with an optimization oracle as of now, one requires further assumptions. However, in Section~\ref{Sec:Reformulations}, we show that this is possible for programs 'underlying an assignment structure'. As a tool, we require the following result for models of the form $\min_{x \in \mathcal{X}} u^TBg(x)$ underlying said structure. 

\begin{theorem}\label{Theo:CorrDoNotCare}
Let $B  \in \R_{\geq 0}^{m \times (m n)}$ be a block diagonal matrix where each block consists of a single row and let $g \colon \R^r \to \R_{\geq 0}^{mn}$ be an arbitrary nonnegative function. Consider program 
\begin{gather}\label{Prob:nom_LP_under_unc}
\inf_{x \in \mathcal{X}} u^TBg(x),
\end{gather}
under interval uncertainty, i.e., $u_i \in \mathcal{U}^i := [\overline{u}_i, \overline{u}_i + \Delta u_i] \subseteq \R_{\geq 0}$ for all $i \in [m]$. Furthermore, we assume that
\begin{gather}\label{Set:Assignment}
\mathcal{X} \subseteq \{x \in \R^r\colon \ \forall i \in [m] \exists ! j \in [n]: \ g_{i,j}(x) \text{ is not constant $0$}\}.
\end{gather} 
Let $\Gamma \in [m]$. Then the $\Gamma$--counterparts of program~\eqref{Prob:nom_LP_under_unc} and program
\begin{gather}
\label{Prob:nom_LP_under_unc_not_cor}
\inf_{x \in \mathcal{X}} y^Tg(x),
\end{gather}
under interval uncertainty $y_{(i,j)}\in \mathcal{Y}^{(i,j)} := [\overline{u}_i B_{i,(i,j)}, (\overline{u}_i + \Delta u_i) B_{i,(i,j)}]$ for all $(i,j) \in [m] \times [n]$ are equivalent.
\end{theorem}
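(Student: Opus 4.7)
The plan is to expand both $\Gamma$-counterparts using the linearity of the uncertainty and then match them pointwise in $x \in \mathcal{X}$ via the assignment structure~\eqref{Set:Assignment}. Since the argument proceeds by showing that the objectives agree value-by-value in $x$, taking the infimum at the end delivers the equivalence.

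First I would write out the $\Gamma$-counterpart of~\eqref{Prob:nom_LP_under_unc}. Setting $f_i(x,u_i) := u_i\,b_i^T g_i(x)$, where $b_i^T \in \R_{\geq 0}^{n}$ denotes the single nonzero row of block $i$ of $B$ and $g_i(x) := (g_{i,j}(x))_{j \in [n]}$, the uncertainty is linear and nonnegative on $\mathcal{X}$, so the definition~\eqref{Prob:GeneralGammaCounterpart} reduces to
\begin{equation*}
\inf_{x \in \mathcal{X}} \left\{ \sum_{i \in [m]} \overline{u}_i\, b_i^T g_i(x) + \sup_{\mathcal{S} \subseteq [m],\,|\mathcal{S}|\le \Gamma} \sum_{i \in \mathcal{S}} \Delta u_i\, b_i^T g_i(x) \right\}.
\end{equation*}
Treating each scalar $y_{(i,j)}$ as its own uncertain parameter analogously yields the $\Gamma$-counterpart of~\eqref{Prob:nom_LP_under_unc_not_cor}:
\begin{equation*}
\inf_{x \in \mathcal{X}} \left\{ \sum_{(i,j) \in [m]\times[n]} \overline{u}_i\, B_{i,(i,j)}\, g_{i,j}(x) + \sup_{\mathcal{T} \subseteq [m]\times[n],\,|\mathcal{T}|\le \Gamma} \sum_{(i,j)\in\mathcal{T}} \Delta u_i\, B_{i,(i,j)}\, g_{i,j}(x) \right\}.
\end{equation*}
The two nominal sums coincide at once since $\sum_{j \in [n]} B_{i,(i,j)}\, g_{i,j}(x) = b_i^T g_i(x)$, so the task reduces to showing that the two inner suprema agree for every $x \in \mathcal{X}$.

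Next, I would fix $x \in \mathcal{X}$ and invoke~\eqref{Set:Assignment}: for every $i \in [m]$ there is a unique index $j(i) \in [n]$ such that $g_{i,j(i)}(x) \neq 0$, while $g_{i,j}(x) = 0$ for $j \neq j(i)$. Consequently $b_i^T g_i(x) = B_{i,(i,j(i))}\, g_{i,j(i)}(x)$, and in the second inner sup every pair $(i,j)$ with $j \neq j(i)$ contributes exactly zero. By nonnegativity of $\Delta u$, $B$, and $g(x)$, an optimal $\mathcal{T}$ can always be chosen to consist only of pairs of the form $(i,j(i))$ with $i$ ranging over a subset of $[m]$ of cardinality at most $\Gamma$. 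The map $\mathcal{S} \mapsto \{(i,j(i)) : i \in \mathcal{S}\}$ provides a bijection that matches the two suprema term by term, so the two values coincide at $x$.

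Taking $\inf_{x \in \mathcal{X}}$ then yields the claim. The main bookkeeping obstacle is the last paragraph: verifying that enlarging the uncertainty index set from $[m]$ to $[m]\times[n]$ does not grant the adversary a strictly better strategy in~\eqref{Prob:nom_LP_under_unc_not_cor}. Without~\eqref{Set:Assignment} one could imagine the adversary spreading her budget across several $j$'s for the same $i$ and overshooting the value achievable in~\eqref{Prob:nom_LP_under_unc}; the single-nonzero-entry property rules this out because every such extra pair is wasted on a zero summand. Note also that the requirement that $g$ be nonnegative is essential here, since it prevents cancellation that could otherwise make zero-valued slots attractive to fill.
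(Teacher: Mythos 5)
Your proof is correct, but it takes a genuinely different and more elementary route than the paper. You work directly with the definition~\eqref{Prob:GeneralGammaCounterpart}: after using nonnegativity of $B$, $g$ and $\Delta u$ to write each inner adversarial problem as a budgeted sum of nonnegative deviation terms, you show that the two objectives agree \emph{pointwise} in $x \in \mathcal{X}$, because every pair $(i,j)$ with $j \neq j(i)$ contributes a zero summand, so the adversary in the $mn$-parameter problem gains nothing from the larger index set; the injection $\mathcal{S} \mapsto \{(i,j(i)) : i \in \mathcal{S}\}$ together with the drop-the-zero-pairs argument gives equality of the two suprema. The paper instead first passes both programs through Theorem~\ref{Cor:LinInt}, obtaining the nested enumeration reformulations~\eqref{Model:GammaCounterPartCor} and~\eqref{Model_GammaCounterPartUnCor}, and then shows that the $(mn+1)\cdot 2^{mn}$ subproblems of the latter collapse to the $(m+1)\cdot 2^m$ subproblems of the former by fixing $b=b(a)$ and discarding sets $\mathcal{Y}$ that force $\mathcal{X}_{\mathcal{Y}} = \emptyset$. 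Your argument is shorter and actually establishes the stronger statement that the two robust objective functions are identical on $\mathcal{X}$ (hence the programs are the same, not merely equal in value); the paper's longer detour has the side benefit of making explicit the reduction in the number of oracle subproblems, which is the computational point exploited later (e.g.\ in Example~\ref{ex:QAP_genau}). One cosmetic caveat: condition~\eqref{Set:Assignment} guarantees a unique $j(i)$ with $g_{i,j(i)}$ not \emph{identically} zero, so $g_{i,j(i)}(x)$ may still vanish at a particular $x$; your argument only ever uses that $g_{i,j}(x)=0$ for all $x\in\mathcal{X}$ whenever $j\neq j(i)$, so this does not affect correctness, but the phrase ``$g_{i,j(i)}(x)\neq 0$'' should be read as a statement about the function, not the value.
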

\begin{proof}
We begin by introducing some notation: We set $B_{.,(0,0)} := \Delta y_{0,0}:= \overline{y}_{0,0}:=0$ (note that this is a slight abuse of notation and we mean the zero vector or the number $0$, depending on the dimension), $\Delta y:= (\Delta y_{a,b})_{(a,b) \in [m] \times [n]} := (\Delta u_a B_{a,(a,b)})_{(a,b) \in [m] \times [n]}$,  $\overline{y}:=(\overline{y}_{(a,b)})_{(a,b) \in [m] \times [n]} = (\overline{u}_a B_{a,(a,b)})_{(a,b) \in [m] \times [n]}$ and with $B_{k,.}$, we denote the $k$--th row of $B$. By applying Theorem~\ref{Cor:LinInt}, we obtain that the $\Gamma$--counterpart of program~\eqref{Prob:nom_LP_under_unc} is equivalent to
\begin{align}\label{Model:GammaCounterPartCor}
\inf_{k \in [m]_0} & \left\{ \inf_{\mathcal{Q} \subseteq [m]} \left\{ \inf_{x \in \mathcal{X_\mathcal{Q}}} \left\{ \Gamma \Delta u_k B_{k,.}g(x) + \sum_{i \in [m]} \overline{u}_iB_{i,.}g(x) + \sum_{q \in \mathcal{Q}} \Delta u_q B_{q,.}g(x) - \Delta u_k B_{k,.}g(x) \right\} \right\} \right\}.
\end{align}
Since $B$ is a block--diagonal matrix where each block contains exactly one row, one obtains $B_{i,(r,s)} = 0$ for $i \neq r$. Furthermore, since for all $i \in [m]$, $g_{i,j}(x) \neq 0$ for exactly one $j \in [n]$ (which will we be denoted by $j(i)$) we obtain 
\begin{gather*}
B_{q,.}g(x) = \sum_{i \in [m], j \in [n]} B_{q,(i,j)} g_{i,j}(x) = \sum_{i \in [m]} B_{q,(i,j(i))} g_{i,j(i)}(x) = B_{q,(q,j(q))} g_{q,j(q)}(x).
\end{gather*}
The $\Gamma$--counterpart of program~\eqref{Prob:nom_LP_under_unc_not_cor} is equivalent to (again by applying Theorem~\ref{Cor:LinInt} -- note that $l(x) = g(x)$ here and we replace $\mathcal{Q}$ by $\mathcal{Y}$ for the sake of notation):
\begin{align}\label{Model_GammaCounterPartUnCor}
\inf_{(a,b) \in [m] \times [n] \cup \{(0,0)\}} & \left\{ \inf_{\mathcal{Y} \subseteq [m] \times [n]} \left\{ \inf_{x \in \mathcal{X_\mathcal{Y}}} \left\{ \Gamma \Delta y_{(a,b)}g_{(a,b)}(x) +F_{\mathcal{Y},a,b}(x) \right\}  \right\} \right\}
\end{align}
with
\begin{align}\label{Eq:Help}
\begin{split}
F_{\mathcal{Y},a,b}(x) &:= \sum_{(i,j) \in [m] \times [n]} \overline{y}_{(i,j)}g_{(i,j)}x + \sum_{(q,p) \in \mathcal{Y}} \Delta y_{(q,p)}g_{(q,p)}(x) - \Delta y_{(a,b)}g_{(a,b)}(x).
\end{split}
\end{align}
In the following, we show that one can reduce the number of subproblems of program~\eqref{Model:GammaCounterPartCor} from $(m\cdot n+1) \cdot 2^{mn}$ to $(m+1)\cdot 2^m$ and that the resulting program is exactly~\eqref{Model_GammaCounterPartUnCor}. On the one hand, if $b \neq b(a)$, then $g_{(a,b)}(x) = 0$ for all $x \in \mathcal{X}$ by assumption and equation~\eqref{Eq:Help} results in
\begin{align}\label{Eq:Help2}
F_{\mathcal{Y},a,b}(x) = \sum_{(i,j) \in [m] \times [n]} \overline{y}_{(i,j)}g_{(i,j)}(x) + \sum_{(q,p) \in \mathcal{Y}} \Delta y_{(q,p)}g_{(q,p)}(x) = F_{\mathcal{Y},0,0}(x).
\end{align}
Thus, instead of $b \in [n]$, we can fix $b = b(a)$ in program~\eqref{Model_GammaCounterPartUnCor} and only have $m+1$ 'outer problems'. On the other hand, for each $\mathcal{Y} \subseteq [m] \times [n]$, $x \in \mathcal{X}_{\mathcal{Y}}$ only holds if
\begin{align*}
\Delta y_{q,p}g_{q,p}(x) \geq \Delta y_{a,b(a)}g_{a,b(a)}(x) > 0 \ \forall (q,p) \in \mathcal{Y}.
\end{align*}
However, $g_{q,p}(x)$ is not equal to $0$ for some $x$ only if $p=p(q)$. Thus, $\mathcal{X}_\mathcal{Y} = \emptyset$ if $(q,p) \in \mathcal{Y}$ for some $p \neq p(q)$. Thus, program~\eqref{Model_GammaCounterPartUnCor} is equivalent to
\begin{align}\label{Model_GammaCounterPartUnCor2}
\inf_{a \in [m]_0} & \left\{ \inf_{\overline{\mathcal{Y}} \subseteq [m]} \left\{ \inf_{x \in \mathcal{X}_{\overline{\mathcal{Y}}}} \left\{\Gamma \Delta y_{(a,b(a))}g_{(a,b(a))}(x) + F_{\overline{\mathcal{Y}},a}(x) \right\} \right\} \right\}
\end{align}
where 
\begin{align*}
\mathcal{X}_{\overline{\mathcal{Y}}} := \{ x \in \mathcal{X}\colon & \Delta y_{q,p(q)}g_{q,p(q)}(x) \geq \Delta y_{a,b(a)}g_{a,b(a)}(x) \ \forall q \in \overline{\mathcal{Y}}, \\
& \Delta y_{q,p(q)}g_{q,p(q)}(x) \leq \Delta y_{a,b(a)}g_{a,b(a)}(x) \ \forall q \in [m]  \setminus \overline{\mathcal{Y}} \},
\end{align*}
$b(0) := 0$ and
\begin{align*}
F_{\overline{\mathcal{Y}},a}(x) &:=  \sum_{(i,j) \in [m] \times [n]} \overline{y}_{(i,j)}g_{(i,j)}(x) + \sum_{(q,p) \in \overline{\mathcal{Y}}} \Delta y_{(q,p(q))}g_{(q,p(q))}(x) - \Delta y_{(a,b(a))}g_{(a,b(a))}(x).
\end{align*}
By inserting the definition of $\overline{y}$ and $\Delta y$ into program~\eqref{Model_GammaCounterPartUnCor2} and by replacing all indices, one obtains program~\eqref{Model:GammaCounterPartCor}.
\end{proof}
Thus, if $g(x)$ is a $0/1$--function, then one can apply Theorem~\ref{Theo:CorrDoNotCare} to obtain an equivalent $\Gamma$--counterpart where the uncertainty is linear and no matrix $B$ is involved. The following corollary demonstrates this for $g(x) = x$:
\begin{corollary}\label{Cor:LPsAssigment}
Consider program \begin{gather}\label{Prob:nom_LP_under_unc:ASSIGN}
\min_{x \in \mathcal{X}} u^TBx
\end{gather}
as in the setting of Theorem~\ref{Theo:CorrDoNotCare} and assume that $
\mathcal{X} \subseteq \{x \in \{0,1\}^{[m] \times [n]}: \sum_{j \in [n]} x_{i,j} = 1 \ \forall i \in [m]\}$.
Let $\Gamma \in [m]$. Then the $\Gamma$--counterpart of \eqref{Prob:nom_LP_under_unc:ASSIGN} is equivalent to
\begin{align*}
\min_{(k,l) \in [m] \times [n] \cup \{(0,0)\}} \left\{ \Gamma \Delta u_k B_{k,(k,l)}  +
\min_{x \in \mathcal{X}} \left\{ \overline{u}^TBx + \sum_{(i,j) \in [m] \times [n]} F_{i,j,k,l}(x) \right\} \right\}
\end{align*}
where $B_{0,.} := \Delta u_0 := 0$ (the first one being a row vector of zeros and the latter being the number $0$) and 
\begin{gather*}
F_{i,j,k,l}(x):=\max\{0, \Delta u_i B_{i,(i,j)} - \Delta u_k B_{k,(k,l)}\}x_{i,j}.
\end{gather*}
\end{corollary}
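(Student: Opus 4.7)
My plan is to reduce Corollary~\ref{Cor:LPsAssigment} to the combinatorial Bertsimas--Sim reformulation in Proposition~\ref{Prop:Tractable_Gamma_Comb} by first invoking the decoupling result Theorem~\ref{Theo:CorrDoNotCare}. First I would set $g(x) := x$ so that program~\eqref{Prob:nom_LP_under_unc:ASSIGN} takes the form $\min_{x \in \mathcal{X}} u^T B g(x)$ required there. I would then verify the hypotheses of Theorem~\ref{Theo:CorrDoNotCare}: $B$ is block diagonal with one row per block by assumption; $g$ is nonnegative on $\mathcal{X}$ since $\mathcal{X} \subseteq \{0,1\}^{[m] \times [n]}$; and the assignment constraint $\sum_{j \in [n]} x_{i,j} = 1$ guarantees that, for every feasible $x$ and every $i \in [m]$, exactly one coordinate $x_{i,j}$ is nonzero, so the structural condition~\eqref{Set:Assignment} is satisfied. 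Theorem~\ref{Theo:CorrDoNotCare} then states that the $\Gamma$--counterpart of~\eqref{Prob:nom_LP_under_unc:ASSIGN} is equivalent to the $\Gamma$--counterpart of $\min_{x \in \mathcal{X}} y^T x$ under the decoupled interval uncertainty $y_{(i,j)} \in [\overline{u}_i B_{i,(i,j)}, (\overline{u}_i + \Delta u_i) B_{i,(i,j)}]$.

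The second step is to observe that this decoupled problem is precisely a binary linear program under interval cost uncertainty, which falls directly into the scope of Proposition~\ref{Prop:Tractable_Gamma_Comb} after re--indexing the ground set from $[n]$ to $[m] \times [n]$ and setting $\overline{c}_{(i,j)} := \overline{u}_i B_{i,(i,j)}$, $\Delta c_{(i,j)} := \Delta u_i B_{i,(i,j)}$. A direct application yields
\begin{align*}
\min_{(k,l) \in [m] \times [n] \cup \{(0,0)\}} \left\{ \Gamma \Delta u_k B_{k,(k,l)} + \min_{x \in \mathcal{X}} \left\{ \sum_{(i,j)} \overline{u}_i B_{i,(i,j)} x_{i,j} + \sum_{(i,j)} \max\{0, \Delta u_i B_{i,(i,j)} - \Delta u_k B_{k,(k,l)}\} x_{i,j} \right\} \right\},
\end{align*}
and the inner linear part rewrites as $\overline{u}^T B x$ because $B$ is block diagonal, producing exactly the claimed formulation with $F_{i,j,k,l}(x) = \max\{0, \Delta u_i B_{i,(i,j)} - \Delta u_k B_{k,(k,l)}\} x_{i,j}$.

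The only bookkeeping I would double--check is the artificial index $(k,l) = (0,0)$: with the conventions $\Delta u_0 := 0$ and $B_{0,\cdot} := 0$, the term $\Gamma \Delta u_k B_{k,(k,l)}$ vanishes and each $\max\{0, \cdot\}$ collapses to $\Delta u_i B_{i,(i,j)}$, so the $(0,0)$--branch reproduces the Soyster value $\min_{x \in \mathcal{X}} (\overline{u} + \Delta u)^T B x$, mirroring the role of $k = 0$ in Proposition~\ref{Prop:Tractable_Gamma_Comb}. I do not anticipate a genuine technical obstacle, because the combinatorial heavy lifting---collapsing the $(mn+1)\cdot 2^{mn}$ subproblems that Theorem~\ref{Cor:LinInt} would naively produce down to $(m+1)\cdot 2^m$ effective subproblems, and discarding the block--diagonal index pairs $(a,b)$ with $b \neq b(a)$---has already been carried out inside Theorem~\ref{Theo:CorrDoNotCare}, so Corollary~\ref{Cor:LPsAssigment} essentially just specializes that result to $g(x) = x$ and then invokes the classical binary reformulation.
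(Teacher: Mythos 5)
Your proposal is correct and follows exactly the paper's route: the paper's proof is precisely ``apply Theorem~\ref{Theo:CorrDoNotCare} with $g(x)=x$ to decouple the uncertainty, then apply Proposition~\ref{Prop:Tractable_Gamma_Comb} to the resulting binary linear program over the ground set $[m]\times[n]$,'' which is what you do, only spelled out in more detail (including the hypothesis checks and the $(0,0)$--branch bookkeeping, which the paper leaves implicit).
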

\begin{proof}
This follows from Theorem~\ref{Theo:CorrDoNotCare} and Proposition~\ref{Prop:Tractable_Gamma_Comb} by setting $g(x) = x$.
\end{proof}

\paragraph*{The special case of $0/1$--functions}
Before we apply our theory in Section~\ref{Sec:Reformulations}, we discuss one more case, namely the $\Gamma$--counterpart~\eqref{Prob:GeneralGammaCounterpart} under linear one--dimensional interval uncertainty with $0/1$--functions:
\begin{assumption}\label{Ass:GeneralLinear01}
For the $\Gamma$--counterpart~\eqref{Prob:GeneralGammaCounterpart} and for all $i \in [m]$ we assume:
\begin{itemize}
\item[(i)] The uncertainty set $\mathcal{U}_i$ is a 1--dimensional interval, i.e., $\mathcal{U}_i = [\overline{u}_i, \overline{u}_i + \Delta u_i] \subseteq \R_{> 0}$ and $\Delta u_i > 0$.
\item[(ii)] There is a $0/1$--function $l_i\colon \mathcal{X} \to \{0,1\}$ such that $f_i(x,u_i) = u_il_i(x)$ for all $u_i \in \mathcal{U}_i$.
\end{itemize}
\end{assumption}
Although Assumption~\ref{Ass:GeneralLinear01} seems restrictive, it covers many combinatorial programs under uncertainty, e.g. the quadratic knapsack problem or the quadratic matching problem. By applying Proposition~\ref{Prop:Tractable_Gamma_Comb}, we obtain the following:
\begin{theorem}\label{Theo:PseudoLin}
Let $\Gamma \in [m]$ and assume that Assumption~\ref{Ass:GeneralLinear01} holds. Then $\Gamma$--counterpart~\eqref{Prob:GeneralGammaCounterpart} is equivalent to
\begin{gather}\label{Prog:PseudoLin}
\inf_{k \in [m]_0} \left\{ \Gamma \Delta u_k + \inf_{x \in \mathcal{X}} \left\{\overline{u}^Tl(x) + \sum_{j \in [m]} \max\{0,\Delta u_j - \Delta u_k\}l_j(x) \right\} \right\}.
\end{gather}
\end{theorem}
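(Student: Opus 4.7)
The plan is to reduce Theorem~\ref{Theo:PseudoLin} to Proposition~\ref{Prop:Tractable_Gamma_Comb} by recasting the $\Gamma$--counterpart as an instance of program~\eqref{Prob:Gamma_Comb} over the binary image $\tilde{\mathcal{X}} := \{l(x) : x \in \mathcal{X}\} \subseteq \{0,1\}^m$. The two enabling observations are that, since $l_i(x) \geq 0$ and $\Delta u_i \geq 0$, the inner supremum over each $\mathcal{U}_i$ is attained at the right endpoint, and that the objective depends on $x$ only through the vector $l(x)$.

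First I would use Assumption~\ref{Ass:GeneralLinear01} to compute, for every $x \in \mathcal{X}$ and every $i \in [m]$,
$$\sup_{u^i \in \mathcal{U}_i} f_i(x,u^i) = (\overline{u}_i + \Delta u_i)\, l_i(x), \qquad f_i(x,\overline{u}^i) = \overline{u}_i\, l_i(x),$$
so that the deviation $\sup_{u^i \in \mathcal{U}_i} f_i(x,u^i) - f_i(x,\overline{u}^i) = \Delta u_i\, l_i(x)$ is nonnegative. Substituting these expressions into $\Gamma$--counterpart~\eqref{Prob:GeneralGammaCounterpart} and pulling $\overline{u}^T l(x)$ out of the selection over $\mathcal{S}$ reduces the problem to
$$\inf_{x \in \mathcal{X}} \Bigl\{ \overline{u}^T l(x) + \sup_{\mathcal{S} \subseteq [m],\, |\mathcal{S}|\leq \Gamma} \sum_{i \in \mathcal{S}} \Delta u_i\, l_i(x) \Bigr\}.$$

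Next I would pass from the variable $x \in \mathcal{X}$ to $y := l(x) \in \tilde{\mathcal{X}}$. Because the objective depends on $x$ only through $l(x)$, this substitution is value-preserving, and the resulting minimum is precisely program~\eqref{Prob:Gamma_Comb} applied to the data $(\overline{c},\Delta c,\mathcal{X}) = (\overline{u},\Delta u,\tilde{\mathcal{X}})$. Since $\tilde{\mathcal{X}} \subseteq \{0,1\}^m$, Proposition~\ref{Prop:Tractable_Gamma_Comb} applies directly and yields the outer minimum over $k \in [m]_0$ with the claimed inner objective. Lifting $y$ back to $x$ via $y_j = l_j(x)$ then produces~\eqref{Prog:PseudoLin}.

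I do not anticipate any serious obstacle. The single step that warrants explicit verification is the $y = l(x)$ substitution, which is justified because both terms of the transformed objective factor through $l(x)$; the positivity conditions in Assumption~\ref{Ass:GeneralLinear01}(i) ensure the right endpoint of $\mathcal{U}_i$ remains the worst case for both values $l_i(x) \in \{0,1\}$. Everything else is a direct invocation of the binary-variable reformulation of Bertsimas and Sim recorded as Proposition~\ref{Prop:Tractable_Gamma_Comb}.
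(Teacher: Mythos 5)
Your proposal is correct and follows essentially the same route as the paper: the paper also introduces binary variables $y = l(x)$, observes that under Assumption~\ref{Ass:GeneralLinear01} the problem becomes an instance of the Bertsimas--Sim setting over a binary feasible set, applies Proposition~\ref{Prop:Tractable_Gamma_Comb}, and substitutes $y_j = l_j(x)$ back. Your version is if anything slightly more careful, since you work directly with the image $l(\mathcal{X}) \subseteq \{0,1\}^m$ and explicitly verify that the worst case is the right endpoint of each interval, whereas the paper states the reduction via the set $\mathcal{X} \times l(\mathcal{X})$ without spelling these points out.
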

\begin{proof}
Under Assumption~\ref{Ass:GeneralLinear01}, program~\eqref{Prob:GeneralNomOpt} is equivalent to $\inf_{(x,y) \in \mathcal{X}_y} u^Ty$ with $\mathcal{X}_y := \mathcal{X} \times l(\mathcal{X}) \subseteq \R^n \times \{0,1\}^{m}$. Then Proposition~\ref{Prop:Tractable_Gamma_Comb} implies that the modified program's $\Gamma$--counterpart is equivalent to
\begin{gather*}
\inf_{k \in [m]_0} \left\{ \Gamma \Delta u_k + \inf_{x \in \mathcal{X}_y} \left\{\overline{u}^Ty + \sum_{j \in [m]} \max\{0,\Delta u_j - \Delta u_k\}y_j \right\} \right\}
\end{gather*}
where $\Delta u_0 := 0$. Since $y_j = l_j(x)$, the claim follows.
\end{proof}
Theorem~\ref{Theo:PseudoLin} demonstrates that one can solve the $\Gamma$--counterpart with an optimization oracle of program~\eqref{Prob:GeneralNomOpt}. This result only implies that additionally, one can reduce the number of oracle calls one has to solve and can determine $\alpha$-approximations (for $\alpha \geq 1$), if program~\eqref{Prob:GeneralNomOpt} is $\alpha$--approximable\footnote{The following definition is not formal and is usually applied for combinatorial programs: Assume that $f^* \in$~$(-\infty,\infty)$ is the optimal value of program~\eqref{Prob:GeneralNomOpt}. Then program~\eqref{Prob:GeneralNomOpt} is called $\alpha$--approximable when there exists a real number $\alpha \geq 1$ and an algorithm $\texttt{ALG}$ with input $(f,\mathcal{X})$, output $\tilde{x}$, the inequality $\alpha f^* \geq f(\tilde{x})$ holds for every instance $(f,\mathcal{X})$ and the running time of algorithm $\texttt{ALG}$ is polynomial in the encoding length of $(f, \mathcal{X})$.}. The proofs are both heavily inspired by the resp. proofs in \cite{IntroGamma} and \cite{Lee2014}:
\begin{theorem}\label{Theo:Approx}
Let $\Gamma \in [m]$. If Assumption~\ref{Ass:GeneralLinear01} holds and program~\eqref{Prob:GeneralNomOpt} is $\alpha$-approximable, then \eqref{Prob:GeneralGammaCounterpart} is $\alpha$--approximable. 
\end{theorem}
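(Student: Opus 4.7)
The plan is to combine Theorem~\ref{Theo:PseudoLin} with the assumed $\alpha$--approximation algorithm for the nominal program in a direct way. By Theorem~\ref{Theo:PseudoLin}, the $\Gamma$--counterpart is equivalent to
\begin{gather*}
\inf_{k \in [m]_0} \left\{ \Gamma \Delta u_k + \underbrace{\inf_{x \in \mathcal{X}} \left\{\overline{u}^Tl(x) + \sum_{j \in [m]} \max\{0,\Delta u_j - \Delta u_k\}l_j(x) \right\}}_{=: F_k^*} \right\}.
\end{gather*}
For each $k \in [m]_0$, the inner minimization is of the form $\inf_{x \in \mathcal{X}} \sum_{j \in [m]} c^{(k)}_j l_j(x)$ with non--negative coefficients $c^{(k)}_j := \overline{u}_j + \max\{0, \Delta u_j - \Delta u_k\}$, i.e., an instance of the same shape as the nominal program~\eqref{Prob:GeneralNomOpt} under Assumption~\ref{Ass:GeneralLinear01} (only the cost vector changes, not $\mathcal{X}$ or the functions $l_j$).

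I would then invoke the assumed algorithm $\texttt{ALG}$ on each such instance $(c^{(k)}, \mathcal{X})$ to obtain a feasible $\tilde{x}_k \in \mathcal{X}$ of value $\tilde{F}_k := \sum_{j \in [m]} c^{(k)}_j l_j(\tilde{x}_k)$ satisfying $\tilde{F}_k \leq \alpha F_k^*$. The proposed approximation algorithm for the $\Gamma$--counterpart computes $\tilde{x}_k$ for every $k \in [m]_0$ and returns $(\tilde{k}, \tilde{x}_{\tilde{k}})$ where $\tilde{k} \in \arg\min_{k \in [m]_0} \{\Gamma \Delta u_k + \tilde{F}_k\}$. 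Denoting by $\mathrm{OPT}$ the optimal value of~\eqref{Prob:GeneralGammaCounterpart} and by $k^* \in \arg\min_{k \in [m]_0}\{\Gamma \Delta u_k + F_k^*\}$ an outer optimum, I can chain
\begin{gather*}
\Gamma \Delta u_{\tilde{k}} + \tilde{F}_{\tilde{k}} \;\leq\; \Gamma \Delta u_{k^*} + \tilde{F}_{k^*} \;\leq\; \Gamma \Delta u_{k^*} + \alpha F_{k^*}^* \;\leq\; \alpha (\Gamma \Delta u_{k^*} + F_{k^*}^*) \;=\; \alpha \cdot \mathrm{OPT},
\end{gather*}
where the last inequality uses $\alpha \geq 1$ and $\Delta u_{k^*} \geq 0$. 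The running time remains polynomial in the input encoding since $\texttt{ALG}$ is called exactly $m+1$ times on instances whose encoding length is polynomial in that of the original instance.

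The only subtle point, and thus the main obstacle, is to argue carefully that $\texttt{ALG}$ actually applies to each of the modified instances $(c^{(k)}, \mathcal{X})$. This follows from the definition of $\alpha$--approximability used in the paper, which quantifies over all valid inputs $(f, \mathcal{X})$, together with the observation that the modified cost vectors $c^{(k)}$ are still non--negative and have the same structural form $\sum_j c_j l_j(x)$ as the nominal objective. This argument is essentially the one from~\cite{IntroGamma} and~\cite{Lee2014} transplanted to the $0/1$--function setting of Assumption~\ref{Ass:GeneralLinear01}, which is precisely the regime in which Theorem~\ref{Theo:PseudoLin} reduces the robust problem to $m+1$ calls of the nominal type.
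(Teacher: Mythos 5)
Your decomposition and algorithm are the same as the paper's: reduce via Theorem~\ref{Theo:PseudoLin} to $m+1$ nominal-type instances, run \texttt{ALG} on each, and return the best. Your chain of inequalities correctly shows that the \emph{surrogate} value $\Gamma \Delta u_{\tilde{k}} + \tilde{F}_{\tilde{k}}$ is at most $\alpha \cdot \mathrm{OPT}$. However, there is one missing link: $\alpha$--approximability of~\eqref{Prob:GeneralGammaCounterpart} requires that the \emph{true robust objective} of the returned solution, $R(\tilde{x}_{\tilde{k}}) := \sum_{i} \overline{u}_i l_i(\tilde{x}_{\tilde{k}}) + \sup_{\mathcal{S}: |\mathcal{S}|\leq\Gamma} \sum_{i\in\mathcal{S}} \Delta u_i l_i(\tilde{x}_{\tilde{k}})$, be at most $\alpha\cdot\mathrm{OPT}$, and you never relate $R(\tilde{x}_{\tilde{k}})$ to the surrogate $\Gamma \Delta u_{\tilde{k}} + \tilde{F}_{\tilde{k}}$. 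Theorem~\ref{Theo:PseudoLin} is an equivalence of \emph{optimal values}; it does not by itself certify that $\Gamma \Delta u_k + G^k(x) \geq R(x)$ for an arbitrary pair $(k,x)$, which is what your argument implicitly needs when you hand back $\tilde{x}_{\tilde{k}}$.

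The gap is easily closed, and doing so is precisely the content of the first half of the paper's displayed chain: write the inner supremum over $\mathcal{S}$ as $\inf_{\theta\geq 0}\{\Gamma\theta + \sum_j \max\{0, \Delta u_j l_j(x) - \theta\}\}$ by LP duality, use $l_j(x)\in\{0,1\}$ to rewrite $\max\{0,\Delta u_j l_j(x)-\theta\} = \max\{0,\Delta u_j - \theta\}l_j(x)$, and then bound the infimum from above by the particular choice $\theta = \Delta u_k$; this yields $R(x) \leq \Gamma\Delta u_k + G^k(x)$ for every $x\in\mathcal{X}$ and every $k\in[m]_0$. With that inequality inserted in front of your chain (applied at $k=\tilde{k}$, $x=\tilde{x}_{\tilde{k}}$), your argument is complete and coincides in substance with the paper's proof; the remaining points you raise (applicability of \texttt{ALG} to the modified non--negative cost vectors, the $m+1$ calls, polynomial running time) match the paper.
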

\begin{proof}
For $k \in [m]_0$, we denote the objective of the $k$--th inner problem of program~\eqref{Prog:PseudoLin} with $G^k(x)$, i.e.,
\begin{gather*}
G^k(x) := \sum_{j \in [m]} (\overline{u}_j + \max\{0,\Delta u_j - \Delta u_k\})l_j(x). 
\end{gather*}
Naturally, one can $\alpha$--approximate program $\inf_{x\in \mathcal{X}} G^k(x)$ for each $k \in [m]_0$ by assumption. Let $x^k$ be the output of the given approximation algorithm with objective value $z^k$ and $Z^*$ be the optimal value of program~\eqref{Prog:PseudoLin}, which is equivalent to $\Gamma$--counterpart~\eqref{Prob:GeneralGammaCounterpart} since Assumption~\ref{Ass:GeneralLinear01} holds. Then we obtain
\begin{alignat*}{2}
Z^* & \leq \left( \sum_{i \in [m]} l_i(x^k)\overline{u}_i \right) +
\sup_{\mathcal{S} \subseteq [m]: |\mathcal{S}| \leq \Gamma} \sum_{i \in \mathcal{S}}  \Delta u_i l_i(x^k) \\
    & = \left( \sum_{i \in [m]} l_i(x^k)\overline{u}_i \right) + \inf_{\theta \geq 0} \sum_{j \in [m]} \max \{0, \Delta u_i l_i(x^k) -  \theta\} + \Gamma \theta \\
    & \overset{l_i(x^k) \in \{0,1\}}{=} \left(\sum_{i \in [m]} l_i(x^k)\overline{u}_i \right) + \inf_{\theta \geq 0} \sum_{j \in [m]} \max\{0, \Delta u_i  -  \theta \}l_i(x^k) + \Gamma \theta \\
    & \leq \Gamma \Delta u_k + \sum_{i \in [m]} (\overline{u}_i + \max \{0, \Delta u_i  -  \Delta u_k\})l_i(x^k)   \\
    & = \Gamma \Delta u_k + G^k(x^k) \\
    & \leq \alpha(z_k^* - \Gamma \Delta u_k) + \Gamma \Delta u_k \\
    & \overset{\alpha \geq 1}{\leq} \alpha z_k^* \\
    & = \alpha Z^*
\end{alignat*}
which proves the claim since it is sufficient to apply the given $\alpha$--approximation to $\inf_{x\in \mathcal{X}} G^k(x)$ for every $k \in [m]_0$ and to solve $\min_{k \in [m]_0} z_k$.
\end{proof}
\begin{theorem}\label{Theo:RedNumber}
Let $\Gamma \in [m]$ and assume that $\Delta u_1 \geq \Delta u_2 \geq \dots \geq \Delta u_m \geq 0$. If Assumption~\ref{Ass:GeneralLinear01} holds, then the $\Gamma$--counterpart~\eqref{Prob:GeneralGammaCounterpart} is equivalent to
\begin{gather*}
\inf_{k \in \mathcal{L}} \left\{ \Gamma \Delta u_k + \inf_{x \in \mathcal{X}} \left\{ f(x,\overline{u}) + \sum_{i \in [k]} \left(\Delta u_i - \Delta u_k \right)l_i(x) \right\} \right\}
\end{gather*}
for $\mathcal{L} := \{\Gamma + 1, \dots, \Gamma + \gamma, m + 1\}$ with $\gamma$ being the largest odd integer smaller than $(m + 1) - \Gamma$ and $\Delta_{m+1}:=0$. Furthermore, if the optimal value of the $k$--th inner program is smaller than $\Gamma \Delta u_l$ for $l \in \mathcal{L}$, one can replace $\mathcal{L}$ with $\mathcal{L}^* := \{k \in \mathcal{L}: \ k > l\}$.
\end{theorem}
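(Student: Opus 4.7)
The plan is to derive the theorem from Theorem~\ref{Theo:PseudoLin} by using the sorted $\Delta u_i$'s to simplify the inner objective, pruning the outer index set via a unimodality argument, and finally adding an early-termination criterion.

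First, I would apply Theorem~\ref{Theo:PseudoLin} directly. Because $\Delta u_1 \ge \dots \ge \Delta u_m \ge 0$ with the convention $\Delta u_{m+1} := 0$, $\max\{0, \Delta u_j - \Delta u_k\} = \Delta u_j - \Delta u_k$ exactly when $j \le k$ and is zero otherwise, so the inner sum collapses to $\sum_{i \in [k]}(\Delta u_i - \Delta u_k) l_i(x)$ as stated. Moreover, $k = 0$ (with $\Delta u_0 = 0$) and $k = m+1$ (with $\Delta u_{m+1} = 0$) yield identical inner problems, so the outer range can be taken as $\{1, \dots, m, m+1\}$, already consistent with $m+1 \in \mathcal L$.

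Next, I would establish a pointwise unimodality. For fixed $x \in \mathcal X$, set $H_k(x) := \Gamma \Delta u_k + f(x, \overline u) + \sum_{i=1}^k (\Delta u_i - \Delta u_k) l_i(x)$ and $n_k(x) := \sum_{i=1}^k l_i(x)$. Direct differencing, identical to the computation in the proof of Theorem~\ref{Theo:Approx}, gives
\[ H_{k+1}(x) - H_k(x) = (\Gamma - n_k(x))(\Delta u_{k+1} - \Delta u_k). \]
Since $n_k(x)$ is nondecreasing in $k$ and $\Delta u_{k+1} - \Delta u_k \le 0$, the increment flips sign at most once, so $k \mapsto H_k(x)$ is unimodal. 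For $k \le \Gamma$, $n_k(x) \le k \le \Gamma$ forces the increment to be nonpositive; iterating gives $H_k(x) \ge H_{\Gamma+1}(x)$ for every $x$, hence $z_k \ge z_{\Gamma+1}$, and all $k \le \Gamma$ can be dropped. The early-termination clause then follows from the nonnegativity of $f(x, \overline u)$ and of $(\Delta u_i - \Delta u_l) l_i(x)$ for $i \le l$, which imply $z_l \ge \Gamma \Delta u_l$: once some computed $z_k$ satisfies $z_k < \Gamma \Delta u_l$ for another candidate $l$, one has $z_l > z_k$ and $l$ is redundant, proving the $\mathcal L^*$ reduction.

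The main obstacle will be the parity reduction producing the exact set $\mathcal L = \{\Gamma+1, \dots, \Gamma+\gamma, m+1\}$ with $\gamma$ the largest odd integer strictly less than $(m+1) - \Gamma$; equivalently, justifying that $k = m$ is also dispensable when $m - \Gamma$ is even. My plan is a pointwise pairing argument building on Step~2 together with the boundary identities
\[ H_m(x) - H_{m+1}(x) = (\Gamma - n_m(x))\Delta u_m, \quad H_m(x) - H_{m-1}(x) = (\Gamma - n_{m-1}(x))(\Delta u_m - \Delta u_{m-1}), \]
using the parity of $n_m(x) - \Gamma$ along the unimodal profile to force $H_m(x) \ge \min\{H_{m-1}(x), H_{m+1}(x)\}$ at any pointwise optimizer. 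Passing from this pointwise statement to the optimization statement $z_m \ge \min\{z_{m-1}, z_{m+1}\}$ is the delicate combinatorial step where I expect most of the work to concentrate.
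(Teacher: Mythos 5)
Your preliminary reductions are sound: the collapse of $\max\{0,\Delta u_j-\Delta u_k\}$ under the sorting, the identity $H_{k+1}(x)-H_k(x)=(\Gamma-n_k(x))(\Delta u_{k+1}-\Delta u_k)$, the elimination of all $k\le\Gamma$, and the early-termination clause (via $z_l\ge\Gamma\Delta u_l$ and monotonicity of $\Delta u$) are all correct. Note, however, that your route is genuinely different from the paper's: the paper proves this theorem in one line, by substituting $y_i=l_i(x)$ as in Theorem~\ref{Theo:PseudoLin} and invoking Theorem~1 of \cite{Lee2014} as a black box, whereas you are re-deriving that result.

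There are two genuine problems with the part you leave open. First, you have misread the target set: $\mathcal{L}=\{\Gamma+1,\dots,\Gamma+\gamma,m+1\}$ is intended as the \emph{alternating} set $\{\Gamma+1,\Gamma+3,\dots,\Gamma+\gamma\}\cup\{m+1\}$ (this is what makes its cardinality $\lceil\frac{m-\Gamma}{2}\rceil+1$, as the paper states immediately after the theorem, and it is the set appearing in \cite{Lee2014}). So the reduction is not merely ``drop $k=m$ when $m-\Gamma$ is even''; every second index in $\{\Gamma+1,\dots,m\}$ must be shown dispensable, i.e.\ $z_k\ge\min\{z_{k-1},z_{k+1}\}$ for each skipped $k$. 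Second, that key step is exactly the part you defer, and your plan for it (a pointwise pairing at $k=m$ followed by a ``delicate'' lift from pointwise to optimal values) is both too narrow and harder than necessary. The correct argument needs no lifting: let $x^k$ be an optimizer of the $k$-th subproblem and use your own difference formula at $x=x^k$. If $n_k(x^k)\le\Gamma$ then $z_{k+1}\le H_{k+1}(x^k)\le H_k(x^k)=z_k$; if $n_{k-1}(x^k)\ge\Gamma$ then $z_{k-1}\le H_{k-1}(x^k)\le z_k$; and the remaining case $n_{k-1}(x^k)<\Gamma<n_k(x^k)$ is impossible because $\Gamma\in\mathbb{Z}$ and $n_k(x^k)-n_{k-1}(x^k)=l_k(x^k)\in\{0,1\}$. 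This integrality observation is the essential content of Theorem~1 in \cite{Lee2014} and is the idea missing from your proposal; without it (and with the misread $\mathcal{L}$), the proof does not establish the stated theorem.
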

\begin{proof}
Since Assumption~\ref{Ass:GeneralLinear01} holds, this statement is a consequence of Theorem 1 in \cite{Lee2014} by introducing binary variables $y_i$, $i \in [m]$, with $y_i = l_i(x)$ (as in the proof of Theorem~\ref{Theo:PseudoLin}).
\end{proof}
In particular, Theorem~\ref{Theo:RedNumber} implies that, instead of solving $m+1$ nominal programs, one only needs to solve $\lceil \frac{m - \Gamma}{2} \rceil + 1$ subproblems instead of $m+1$, as in the case of Theorem~\ref{Theo:PseudoLin}. \cite{Lee2014} demonstrates that this significantly reduces the number of subproblems one needs to solve for the linear case. 
\\
Before we conclude this section, for the sake of completeness, we note the following: 
\begin{remark}\label{Rem:Concave}
With respect to concave uncertainties under Assumption~\ref{Ass:General}, one can show that $\Gamma$--counterpart~\eqref{Prob:GeneralGammaCounterpart} is equivalent to
\begin{alignat}{2}\label{Reform:Concave}
\min_{k \in [m]_0} & \left\{ \min_{\substack{x \in \mathcal{X}, v^1 \in \R^{L_1}, \\  \dots, v^m \in \R^{L_m}}} \left\{ \Gamma \theta^k(x,v^k) + \sum_{i \in [m]} f_i(x,\overline{u}^i) + \max \{0, \theta^i(x,v^i)- \theta^k(x,v^k) \} \right\} \right\}
\end{alignat}
where
\begin{gather*}
\theta^k(x,v^k) := (\overline{u}^k)^Tv^k + \support{k} - f_k(x,\overline{u}^k) - f_{k,*}(x,v^k)  \ \forall k \in [m].
\end{gather*}
In this context, however, it seems unlikely that one would use this formulation over the one given in Corollary~\ref{Prop:FenchelGammaCounterpart} since the objective is very different from the one of program~\eqref{Prob:GeneralNomOpt}, the feasible set was altered, rendering oracles not applicable.
\end{remark}

\section{Practical examples}\label{Sec:Reformulations}
In this section, we demonstrate some examples of the reformulations of Section~\ref{sec:Rev}. We note that some are maximization programs for which our theory naturally applies as well.
\paragraph*{Linear programs under uncertainty}
Here, we cover the case of $\min_{x \in \mathcal{X}} u^TBx$. If $B$ is the unit matrix and $u$ is subject to interval uncertainty, we obtain the original setting of Bertsimas and Sim. We have already shown with Corollary~\ref{Cor:LPsAssigment} that there is a case where one can 'shift' $B$ into the uncertainty set and that one can solve the $\Gamma$--counterpart with a polynomial number of oracle calls. This is a generalization of a result in \cite{RobustScheduling} which has been applied to the single--machine scheduling problem under uncertainty:
\begin{example}\label{Ex:SchedulingGeneral}
We recall that in Section~\ref{sec:Rev}, we considered an instance of the program 
\begin{align}\label{Prob:NomScheduling}
\begin{split}
\min_{x} & \ \sum_{i,j \in \mathcal{I}, \mathcal{J}} u_i q_j x_{i,j}, \\
\mathrm{s.t.} & \ x \in \mathcal{X} \subseteq \left\{x \in \{0,1\}^{|\mathcal{I}|\cdot|\mathcal{J}|}:
\sum_{j \in \mathcal{J}} x_{i,j} = 1 \ \forall i \in \mathcal{I}\right\}
\end{split}
\end{align}
where $u_i$ is subject to uncertainty $[\overline{u}_i, \overline{u}_i + \Delta u_i]$ in the context of single--machine scheduling. By applying Corollary~\ref{Cor:LPsAssigment}, one can show that the $\Gamma$--counterpart of program~\eqref{Prob:NomScheduling} is equivalent to taking the minimum of
\begin{align*}
\min_{(k,l) \in \mathcal{I} \times \mathcal{J}} \left\{ \Gamma q_k \Delta u_l + \min_{x \in \mathcal{X}} \left\{ \sum_{i,j \in \mathcal{I}, \mathcal{J}} (\overline{u}_i + \max\{0, \Delta u_i - \frac{\Delta u_k q_l}{q_j}\})q_jx_{i,j} \right\} \right\}
\end{align*}
and 
\begin{align*}
\min_{x \in \mathcal{X}} \sum_{i,j \in \mathcal{I}, \mathcal{J}} (\overline{u}_i + \Delta u_i)q_jx_{i,j}.
\end{align*}
In \cite{RobustScheduling}, this has been shown by algebraic means.
\end{example}

\paragraph*{Quadratic programs under uncertainty}
As an application of Corollary~\ref{Cor:LinInt}, we reformulate the QAP under uncertainty given in Section~\ref{sec:Rev}. 
\begin{example}\label{ex:QAP_genau}
We consider the QAP under interval uncertainty
\begin{align}\label{Prob:QAP_Nom}
\begin{split}
\min_{x} & \ \sum_{(i,j,r,s) \in [n]^4} c_{i,j}d_{r,s}x_{i,r}x_{j,s} \\
\mathrm{s.t.} & \ x \in \mathcal{X} = \{x \in \{0,1\}^{[n]^2}: \sum_{i \in [n]} x_{i,r} = 1 \ \forall r \in [n], \ \sum_{r \in [n]} x_{i,r} = 1 \ \forall i \in [n]\}
\end{split}
\end{align}
with uncertain coefficients $c_{i,j} \in \mathcal{U}_{i,j} := [\overline{c}_{i,j}, \overline{c}_{i,j} + \Delta c_{i,j}] \subseteq \R_{\geq 0}$ for all $i,j \in [n]$ and $d_{r,s} \geq 0$ for all $r,s \in [n]$. Let $\Gamma \in [n^2]$.
The objective of ~\eqref{Prob:QAP_Nom} equals $u^TBg(x)$ where $B \in \R^{n^2 \times n^4}$ is a block diagonal matrix with $n^2$ copies of $d^T$, $u = c$ and $g_{(i,j),(r,s)}(x):= x_{i,r}x_{j,s}
$ for $(i,j,r,s) \in [n]^4$. Then Theorem~\ref{Theo:CorrDoNotCare} implies that the $\Gamma$--counterpart of program~\eqref{Prob:QAP_Nom} is equivalent to the $\Gamma$--counterpart of
\begin{gather*}
\min_{x \in \mathcal{X}} y^Tg(x)
\end{gather*}
with $y \in \R^{n^4}$ being subject to interval uncertainty, in particular $y_{i,j,r,s} \in [\overline{c}_{i,j}d_{r,s}, (\overline{c}_{i,j} + \Delta c_{i,j})d_{r,s}]$ for $i,j,r,s \in [n]$. Now, we can apply Theorem~\ref{Theo:PseudoLin} and we obtain the reformulation
%
\begin{align}\label{Prob:QAP_Reform_Two}
\min_{\substack{(k_1,k_2,k_3,k_4) \in [n]^4 \\ \cup \{(0,0,0,0)\}}} & \left\{ \Gamma (\Delta c_{k_1,k_2})d_{k_3,k_4} + \min_{x \in \mathcal{X}} \left\{ F_{k_1,k_2,k_3,k_4}(x) \right\} \right\}
\end{align}
where 
\begin{gather*}
F_{k_1,k_2,k_3,k_4}(x) = \sum_{(i,j,r,s) \in [n]^4} (\overline{c}_{i,j}d_{r,s} + \max \{0, \Delta {c}_{i,j} d_{r,s} - \Delta c_{k_1,k_2}d_{k_3,k_4} \})x_{i,r}x_{j,s} 
\end{gather*}
for $(k_1,k_2,k_3,k_4) \in [n]^4$, $F_{0,0,0,0}(x) := \sum_{(i,j,r,s) \in [n]^4}  (\overline{c}_{i,j}d_{r,s} + \Delta {c}_{i,j} d_{r,s})x_{i,r}x_{j,s}$, and $\Delta c_{0,0} := d_{0,0} := 0$.
If we assume that the flow and the distance coefficients are symmetrical, i.e., $\Delta c_{i,j} = \Delta c_{j,i}$ and $d_{r,s} = d_{s,r}$ for all $i,j,r,s \in [n]$, then we only have to solve inner problems of the set
\begin{gather*}
\mathcal{M}:=\{(k_1,k_2,k_3,k_4) \in [n]^4: \ k_1 < k_2, k_3 < k_4\} \cup \{(0,0,0,0)\}.
\end{gather*}
Thus, one needs to solve $1 + (\frac{n(n-1)}{2})^2 = \frac{n^4 - n^3}{2} + 1$ QAPs to solve program~\eqref{Prob:QAP_Reform_Two}.
By application of Theorem~\ref{Theo:RedNumber}, we can reduce the number of subproblems to $\lceil \frac{n^4 - n^3}{4}  + \frac12 - \frac{\Gamma}{2} \rceil + 1$. In our electronic companion, we demonstrate how the application of Theorem~\ref{Theo:RedNumber} significantly speeds up the the optimization process.
\end{example}

\begin{example}
We consider the following quadratic combinatorial program under interval uncertainty:
\begin{align}\label{Prob:QuadUnc}
\begin{split}
\min_{x \in \mathcal{X} \subseteq \{0,1\}^n} & \sum_{i \in[n]} \sum_{j\in [i]} p_{i,j} x_ix_j
\end{split}
\end{align}
with uncertain coefficients $p_{i,j} \in [\bar{p}_{i,j}, \bar{p}_{i,j} + \Delta p_{i,j}]$. Let $m:= n^2 - \frac{n(n+1)}{2} = \frac{n^2 - n}{2}$ be the number of uncertain coefficients and let $\mathcal{M}:=\{(k,l) \in [n]^2: \ l \leq k\}$. For $\Gamma \in [m]$, its $\Gamma$--counterpart is given by
\begin{align}\label{Prob:RobQP}
\min_{x \in \mathcal{X}} & \left\{ \sum_{(i,j) \in \mathcal{M} } \bar{p}_{i,j} x_ix_j + \max_{\mathcal{S} \subseteq \mathcal{M}: |\mathcal{S}| \leq \Gamma} \left\{ \sum_{(i,j) \in \mathcal{S}} \Delta p_{i,j}x_ix_j \right\} \right\}
\end{align}
By applying Theorem~\ref{Theo:PseudoLin}, we obtain that program~\eqref{Prob:RobQP} is equivalent to
\begin{align*}
\min_{(k,l) \in \mathcal{M} \cup \{(0,0)\}} \left\{ \Gamma \Delta p_{k,l} + \min_{x \in \mathcal{X}}   \left\{ \sum_{i \in[n]} \sum_{j\in [i]} (\bar{p}_{ij} + \max \{0, \Delta p_{i,j} - \Delta p_{k,l}\}) x_ix_j  \right\} \right\}
\end{align*}
where $\Delta p_{0,0} := 0$ and we can solve the robust counterpart with $m+1$ calls of an optimization oracle of program~\eqref{Prob:QuadUnc}.

%
\end{example}
To conclude the discussion of quadratic programs under uncertainty, we demonstrate the  approach of applying the notion of term--wise parallel vectors and hidden concavity \cite{TermWise} for interval uncertainty. 
\begin{example}
Let $\mathcal{X}$ be a convex set. We consider the following program:
\begin{align}\label{Prob:NomQuad}
\inf_{x \in \mathcal{X}} \sum_{i \in [m]} (x_i - u_i)^2.
\end{align}
For every $i \in [m]$, $u_i$ is subject to uncertainty $\mathcal{U}_i := [\overline{u}_i, \overline{u}_i + \Delta u_i]$. For $\Gamma \in [m]$, the $\Gamma$--counterpart of program~\eqref{Prob:NomQuad} after applying Lemma~\ref{Lem:ReModel} is given by
\begin{align}\label{Prob:FirstQuad}
\begin{split}
\inf_{x, p, \theta} & \ \Gamma \theta + \sum_{i \in [m]} (x_i - \overline{u}_i)^2 + p_i, \\
\mathrm{s.t.} & \ x \in \mathcal{X}, \\
& \max_{u_i \in \mathcal{U}_i} (x_i - u_i)^2 - (x_i - \overline{u}_i)^2 \leq p_i + \theta \ \forall i \in [m].
\end{split}
\end{align}
The inequalities of program~\eqref{Prob:FirstQuad} are equivalent to
\begin{gather}\label{Ineq:ReformedQuad}
-2x_i u_i + u_i^2 \leq p_i + \theta - 2x_i \overline{u}_i + \overline{u}_i^2 \ \forall u_i \in \mathcal{U}_i
\end{gather}
for each $i \in [m]$. The left--hand side of \eqref{Ineq:ReformedQuad} is not concave in $u_i$ and thus, we cannot apply the reformulations of Subsection~\ref{subsec:Concave}. We set $h(y) := y^2$ for all $y \in \R$  
and consider the following formulation of the uncertainty set:
\begin{gather*}
[\overline{u}_i, \overline{u}_i + \Delta u_i] = \{u_i \in \R\colon (-2 \overline{u}_i - \Delta u_i)u_i + u_i^2 \leq -\overline{u}_i^2 - \overline{u}_i \cdot \Delta u_i\}.
\end{gather*}
Finally, define
\begin{gather*}
\alpha := -2\overline{u}_i - \Delta u_i, \ \beta := 1, \ \gamma = -\overline{u}_i^2 - \overline{u}_i \Delta u_i. \
\end{gather*}
Since $\alpha$ and $\beta$ are scalars, they are clearly term--wise parallel \cite{TermWise}. Thus, we obtain that \eqref{Ineq:ReformedQuad} is satisfied for $(x, p, \theta)$ if and only if there exist $v_1, \dots, v_m \in \R$, such that
\begin{align*}
&(1 + v)h_i^*\left(\frac{2x + 2 \overline{u}_iv_i + \Delta u_iv_i}{1+v_i}\right) - \overline{u}_i^2v_i - \overline{u}_iv_i \Delta u_iv_i + 2x_i \overline{u}_i\le p_i + \theta  + \overline{u}_i^2,\\
&1 + v_i \ge 0,\\
&v_i\ge 0,
\end{align*}
see \cite{Fenchel}, Subsection 4.3, for details. Clearly, the second inequality is redundant. Furthermore, since
$h_i^*(z) = \frac{z^2}{4}$, the first inequality is convex in $(x,v_i)$ if it is assumed that $x \geq 0$. If $\mathcal{X} \subseteq \R^n_{\geq \overline{u}}$, then the $\Gamma$--counterpart of program~\eqref{Prob:NomQuad} is a convex optimization problem, although the uncertainty is not concave.
\end{example}

\paragraph*{Deadline uncertainties in a piecewise linear setting}
We conclude our discussion of applications with the deadline uncertainty setting we introduced in Section~\ref{sec:Rev}.

\begin{example}\label{Ex:UncertainBounds}
Consider the $\Gamma$--counterpart of program~\eqref{Prob:BoundNom} as introduced in Section~\ref{sec:Rev}:
\begin{align*}
\inf_{x \in \mathcal{X}}  \left\{ \sup_{\mathcal{S} \subseteq [m]: |\mathcal{S}| \leq \Gamma} \left\{ \sum_{i \in \mathcal{S}} \max\{0,x_i - \overline{b}_i + \Delta b_i\} + \sum_{i \in [m] \setminus \mathcal{S}} \max\{0, x_i - \overline{b}_i\} \right\} \right\}.
\end{align*}
An equivalent reformulation, given in Remark~\ref{Rem:NoFenchel} as program~\eqref{Prob:GammaCombCounterpartWithoutFenchel}, is
\begin{align}\label{Prob:PiecewiseLinearFirstStep}
\begin{split}
\inf_{k \in [m]_0} \left\{ \inf_{x \in \mathcal{X}} \left\{  \Gamma \theta^k(x) + 
\sum_{i \in [m]}  \max \{0, x_i - \overline{b}_i, \max\{0,x_i - \overline{b}_i + \Delta b_i\} - \theta^k(x)\} \right\} \right\}
\end{split}
\end{align}
with $\theta^k(x) := \max \{0, x_k - \overline{b}_k + \Delta b_k\} - \max\{0,x_k - \overline{b}_k\}$ and $\theta^0(x) := 0$. Thus, for $k = 0$, it is necessary to solve
$ \min_{x \in \mathcal{X}} \sum_{i \in [m]} \max \{0, x_i - \overline{b}_i + \Delta b_i\}$. For $k > 0$, we distinguish between three cases:
\begin{enumerate}
\item[i)] $x_k \geq \overline{b}_k$, i.e., $\theta^k(x) = \Delta b_k$ and
\begin{gather}\label{Eq:Case1}
\max \{0, x_i - \overline{b}_i, \max \{0,x_i - \overline{b}_i + \Delta b_i\} - \theta^k(x)\} = \max\{0, x_i - \overline{b}_i, x_i - \overline{b}_i + \Delta b_i -\Delta b_k\}.
\end{gather}
\item[ii)] $x_k \in [\overline{b}_k- \Delta b_k, \overline{b}_k]$, i.e., $\theta^k(x)  = x_k - \overline{b}_k + \Delta b_k$ and
\begin{align}\label{Eq:Case2}
\begin{split}
&\max \{0, x_i - \overline{b}_i, \max \{0,x_i - \overline{b}_i + \Delta b_i\} - \theta^k(x)\} =\\ & \max\{0, x_i - \overline{b}_i, x_i - \overline{b}_i + \Delta b_i - x_k + \overline{b}_k - \Delta b_k\}.
\end{split}
\end{align}
\item[iii)] $x_k \leq \overline{b}_k - \Delta b_k$, i.e., $\max \{0, x_k - \overline{b}_k\} = \max \{0, x_k - \overline{b}_k + \Delta b_k\}=0$. Thus, $\theta^k(x) = 0$ and we refer to the case of $k = 0$.
\end{enumerate}
For each $k \in [m]$, by applying equations~\eqref{Eq:Case1} and \eqref{Eq:Case2}, we solve 
\begin{align}\label{Prob:SubProbVRPNOTRec}
\begin{split}
\inf_{x \in \mathcal{X}} & \ \sum_{i \in [m]} \max\{0, x_i - \overline{b}_i, x_i - \overline{b}_i + \Delta b_i - \Delta b_k \} + \Gamma \Delta b_k \\
\mathrm{s.t.} & \ x_k \in [\overline{b}_k - \Delta b_k, \overline{b}_k]
\end{split}
\end{align}
and
\begin{align}\label{Prob:SubProbVRPRec}
\begin{split}
\inf_{x \in \mathcal{X}} & \ \sum_{i \in [m]} \max\{0, x_i - \overline{b}_i, x_i - \overline{b}_i + \Delta b_i - x_k + \overline{b}_k -\Delta b_k \} + \Gamma (x_k - \overline{b}_k + \Delta b_k) \\
\mathrm{s.t.} & \ x_k \geq b_k.
\end{split}
\end{align}
Therefore, in total, we need to solve $2m + 1$ optimization programs with a piecewise linear objective with the addition of one additional hard bound for exactly one variable for $k \in [m]$. In the electric companion, we apply this reformulation to a special case of the vehicle routing problem with general time windows.
\end{example}
\section{Conclusion} \label{Sec:Conclus}
In this paper, we studied $\Gamma$--counterparts of discrete nonlinear
optimization problems under uncertainty in the objective. We established reformulations of $\Gamma$--counterparts by applying reformulations techniques developed in \cite{Fenchel}. Similar to $\Gamma$--uncertainties in \cite{IntroGamma} and \cite{PriceRobust}, our reformulations work for general MINLPs and for combinatorial optimization problems with linear uncertainty when attempting to optimize over the original feasible set $\mathcal{X}$. While those reformulations are not necessarily computationally tractable, we have provided examples where this is indeed the case, namely for linear uncertainties involving $0/1$--functions, programs involving some kind of assignment structure.
Furthermore, we discussed the general case with an application in logistics. 
Possible further research for this topic include extensive numerical studies for the derived reformulations that could be based on our prototypical study in the electronic companion. Furthermore, one could also investigate whether the generalizations of \cite{Poss12}, \cite{Poss14} and \cite{POSS2018} to the nonlinear $\Gamma$--counterpart are possible and tractable as well. Finally, one could attempt to investigate cases where one could extend the optimization oracle from $\mathcal{X}$ to $\mathcal{X}_\mathcal{Q}$ as in the setting of Theorem~\ref{Cor:LinInt} or to perform, under further assumptions, more reformulations that reduce the number of the problems from exponential to polynomial im $m$, similar to Theorem~\ref{Theo:CorrDoNotCare}. 

\section*{Funding and Acknowledgements} 
The authors are grateful to Jana Dienstbier for many fruitful discussions, in particular on the quadratic assignment problem under uncertainty.
Research reported in this paper was partially supported by project HealthFaCT under BMBF grant 05M16WEC. It was also funded by the
Deutsche Forschungsgemeinschaft (DFG, German Research Foundation) -- Project--ID 416229255 -- SFB 1411. Furthermore, this paper has received funding from the European Union’s Horizon 2020 research and innovation program under the Marie Skłodowska-Curie grant agreement No 764759.

\bibliographystyle{gOMS}
\bibliography{Bibliography_OR}
\newpage
\appendix
\section{Appendix: Numerical study} \label{Sec:App}
The programs were implemented in Python 3.7. To solve the optimization programs, we used Gurobi 9.0.1. \cite{Gurobi} running on machines with Xeon E3-1240 v5 CPUs (4 cores, 3.5 GHz each).
\subsection{Vehicle routing problem with soft time windows under uncertainty}
We elaborate on Example~\ref{Ex:UncertainBounds} based on \cite{Dummies} and \cite{VRPSTW}. We consider a complete digraph $D=(N,A)$ with nodes $N:=[n]$, a start depot $0$ and a copy of the start depot $n+1$, and the digraph $\bar{D} = (V, \bar{A})$ with nodes $V:= [n+1]_0$ and arcs 
\begin{gather*}
\bar{A} = A \cup \{(0,j): \ j \in N\} \cup \{(i,n+1): \ i \in N \cup \{0\}\}.
\end{gather*}
The following data are given: For every arc $a \in \bar{A}$, the travel time is $t_a$ and for every node $i \in N$, a service time $s_i$ and a soft due time $b_i$ is given. Given a homogeneous fleet of $K$ vehicles, all nodes $i \in N$ have to be 'visited' by exactly one vehicle exactly once and with as little delay as possible. The vehicles start and end at the depot. In the following, the binary variables $x_{i,j}^k \in \{0,1\}$ for $(i,j) \in \bar{A}$ and $k \in [K]$ denote whether vehicle $k$ 'uses' arc $(i,j)$ and the real variables $T_i \in \R_{\geq 0}$ for  $i \in V$ denote the arrival time of a vehicle at node $i$. With this notation, we obtain the following optimization program (with $\delta^{\mathrm{out}}(v)$ and $\delta^{\mathrm{in}}(v)$, we denote the outgoing and the incoming arcs of $v \in V$ in the graph $\bar{D}$):
\begin{subequations}
\begin{align}\label{Prob:VRPTSW}
\min_{x,T} & \ \sum_{i \in N} \max \{0, T_i - b_i\}, \\
\text{s.t.} & \ \sum_{k\in[K]} \sum_{(i,j)\in\delta^{\mathrm{out}}(i)}x_{i,j}^{k}=1\ \forall{i}\in{N},  \label{Const:VehR1} \\
	& \sum_{(0,j)\in\delta^{\mathrm{out}}(0)}x_{0,j}^{k}=1 \ \forall{k}\in[K],  \label{Const:VehR2} \\
	& \sum_{(i,j)\in\delta^{\mathrm{in}}(j)}x_{i,j}^{k} - \sum_{(j,i)\in\delta^{\mathrm{out}}(j)}x_{j,i}^{k}=0 \ \forall k\in [K] ,  j\in N,  \label{Const:VehR3} \\
	&\sum_{(i,n+1)\in \delta^{\mathrm{in}}(0)} x_{i,0}^{k} = 1 \ \forall k \in [K],  \label{Const:VehR4} \\
	&x_{i,j}^{k}(T_{i} + s_i + t_{i,j} - T_j)\leq 0 \ \forall k \in [K], \ (i,j)\in A,  \label{Const:VehR5} \\
    &x_{i,j}^{k} \in \{0,1\} \ \forall k \in [K], \ (i,j)\in A, \label{Const:Binary}\\
    &T_i \geq 0 \ \forall i \in V. \label{Const:Time_Non_Negative}
\end{align}
\end{subequations}


Constraint~\eqref{Const:VehR1} ensures that each $i \in N$ is served exactly once by exactly one vehicle. Constraints~\eqref{Const:VehR2} and \eqref{Const:VehR4} ensure that each vehicle leaves and enters the depot or stays at the depot. In combination with constraints \eqref{Const:VehR1}, \eqref{Const:VehR2} and \eqref{Const:VehR4}, constraint~\eqref{Const:VehR3} ensures that each node is served exactly once and by exactly one vehicle. Constraint~\eqref{Const:VehR5} ensures that, if vehicle $k$ serves node $j$ after node $i$, the arrival time $T_j$ is at least as large as the arrival time $T_i$ added to the time it requires for serving $i$ and going from $i$ to $j$. Finally, \eqref{Const:Binary} and \eqref{Const:Time_Non_Negative} ensure that $x$ is binary and $T$ is non--negative. Note that this formulation is only one of many possibilities to formulate vehicle routing problems -- for an overview, we refer the reader to \cite{SurveyVRP}. We attempt to be robust against scenarios of the set $\bigtimes_{i \in N} [\overline{b}_i - \Delta b_i, \overline{b}_i]$. Solving the $\Gamma$--counterpart for all $\Gamma \in [m]$ would show how many shifts of the due times are possible without any (or only little) delay. \\
For our experiments we use the Solomon instances r101, r102, c101, c102, rc101 and rc102. If these names begin with r, the nodes are generated randomly, if they begin with c, they are clustered, and otherwise some nodes are generated randomly and some are clustered -- for a detailed description of the construction, see \cite{Solomon}. As due time $b_i$ we chose the start time specified in the original instance for the customer, i.e., node $i$. The uncertainty set was constructed randomly, i.e., $\Delta b_i$ is a uniformly distributed random variable in $[0,\overline{b}_i]$. Since we were ultimately aiming to find optimal solutions for the $\Gamma$--counterpart, we tested $N=[8]$ and $N=[10]$, $K \in [3]$ and all $\Gamma \in N$, and calculated the optimal solutions for the respective nominal program. We selected the first $|N|$ customers of the list of customers given in the resp. instance.

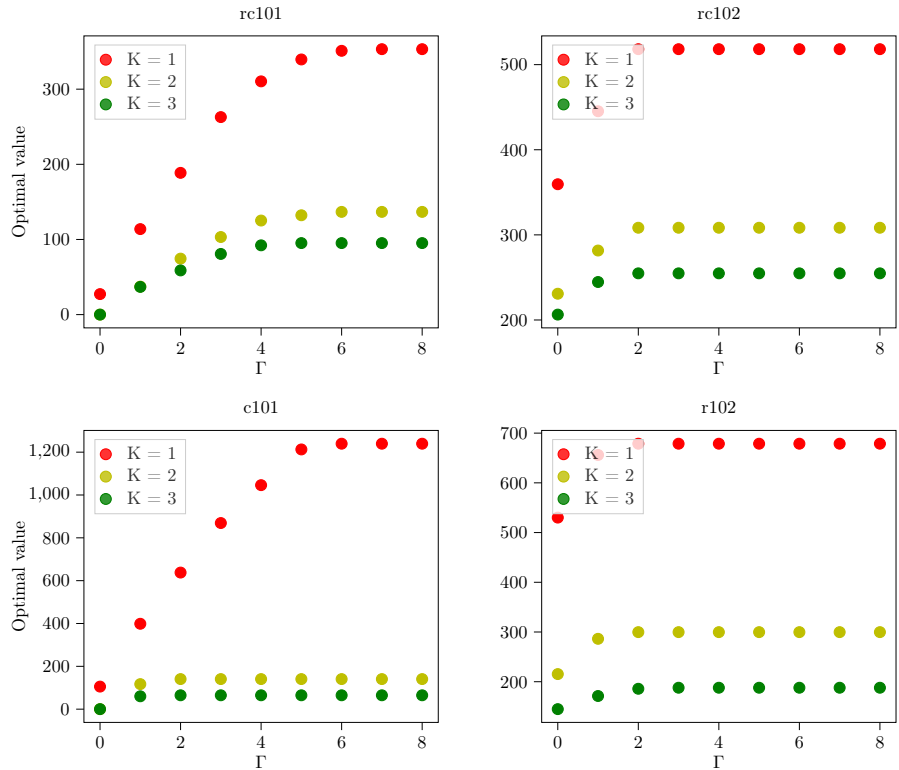
\begin{figure}[H]
\centering
\scalebox{0.68}{
\begin{tikzpicture}

\definecolor{color0}{rgb}{0.75,0.75,0}

\begin{groupplot}[group style={group size=2 by 2, horizontal sep=2cm, vertical sep=2cm}]
\nextgroupplot[
legend cell align={left},
legend style={fill opacity=0.8, draw opacity=1, text opacity=1, at={(0.03,0.97)}, anchor=north west, draw=white!80!black},
tick align=outside,
tick pos=left,
title={rc101},
x grid style={white!69.0196078431373!black},
xlabel={\(\displaystyle \Gamma\)},
xmin=-0.4, xmax=8.4,
xtick style={color=black},
y grid style={white!69.0196078431373!black},
ylabel={Optimal value},
ymin=-17.6617143043621, ymax=370.896000391605,
ytick style={color=black}
]
\addplot [semithick, red, mark=*, mark size=3, mark options={solid}, only marks]
table {%
0 27.3005762384212
1 113.78908300188
2 188.637797568131
3 262.840269195654
4 310.401100033197
5 339.539646654361
6 351.043877375048
7 353.234286087243
8 353.234286087243
};
\addlegendentry{K = 1}
\addplot [semithick, color0, mark=*, mark size=3, mark options={solid}, only marks]
table {%
0 0
1 37.2420062992207
2 74.4840125984414
3 103.242358620855
4 125.169771885691
5 132.192959915081
6 136.695240717565
7 136.695240717565
8 136.695240717565
};
\addlegendentry{K = 2}
\addplot [semithick, green!50!black, mark=*, mark size=3, mark options={solid}, only marks]
table {%
0 0
1 36.8796017829337
2 58.8070150477695
3 80.7344283126053
4 92.1990452858838
5 95.1820536748183
6 95.1820536748183
7 95.1820536748183
8 95.1820536748183
};
\addlegendentry{K = 3}

\nextgroupplot[
legend cell align={left},
legend style={fill opacity=0.8, draw opacity=1, text opacity=1, at={(0.03,0.97)}, anchor=north west, draw=white!80!black},
tick align=outside,
tick pos=left,
title={rc102},
x grid style={white!69.0196078431373!black},
xlabel={\(\displaystyle \Gamma\)},
xmin=-0.4, xmax=8.4,
xtick style={color=black},
y grid style={white!69.0196078431373!black},
ymin=190.730210584368, ymax=533.767097387546,
ytick style={color=black}
]
\addplot [semithick, red, mark=*, mark size=3, mark options={solid}, only marks]
table {%
0 359.495947699551
1 445.23821409401
2 518.174511623765
3 518.174511623765
4 518.174511623765
5 518.174511623765
6 518.174511623765
7 518.174511623765
8 518.174511623765
};
\addlegendentry{K = 1}
\addplot [semithick, color0, mark=*, mark size=3, mark options={solid}, only marks]
table {%
0 230.848283790956
1 281.597335318944
2 308.367664933213
3 308.367664933213
4 308.367664933213
5 308.367664933213
6 308.367664933213
7 308.367664933213
8 308.367664933213
};
\addlegendentry{K = 2}
\addplot [semithick, green!50!black, mark=*, mark size=3, mark options={solid}, only marks]
table {%
0 206.322796348149
1 244.621333304128
2 254.808465512234
3 254.808465512234
4 254.808465512234
5 254.808465512234
6 254.808465512234
7 254.808465512234
8 254.808465512234
};
\addlegendentry{K = 3}

\nextgroupplot[
legend cell align={left},
legend style={fill opacity=0.8, draw opacity=1, text opacity=1, at={(0.03,0.97)}, anchor=north west, draw=white!80!black},
tick align=outside,
tick pos=left,
title={c101},
x grid style={white!69.0196078431373!black},
xlabel={\(\displaystyle \Gamma\)},
xmin=-0.4, xmax=8.4,
xtick style={color=black},
y grid style={white!69.0196078431373!black},
ylabel={Optimal value},
ymin=-61.8342067696535, ymax=1301.43875307199,
ytick style={color=black}
]
\addplot [semithick, red, mark=*, mark size=3, mark options={solid}, only marks]
table {%
0 105.359410926432
1 398.712955728297
2 637.767518087942
3 869.096177375391
4 1045.9401565163
5 1212.73382657971
6 1239.47180035191
7 1239.47180035191
8 1239.47180035191
};
\addlegendentry{K = 1}
\addplot [semithick, color0, mark=*, mark size=3, mark options={solid}, only marks]
table {%
0 0.132745950421224
1 116.948821604184
2 140.47075520196
3 140.47075520196
4 140.47075520196
5 140.47075520196
6 140.47075520196
7 140.47075520196
8 140.47075520196
};
\addlegendentry{K = 2}
\addplot [semithick, green!50!black, mark=*, mark size=3, mark options={solid}, only marks]
table {%
0 0.132745950421556
1 60.1160870855148
2 64.7874258412394
3 64.7874258412394
4 64.7874258412394
5 64.7874258412394
6 64.7874258412394
7 64.7874258412394
8 64.7874258412394
};
\addlegendentry{K = 3}

\nextgroupplot[
legend cell align={left},
legend style={fill opacity=0.8, draw opacity=1, text opacity=1, at={(0.03,0.97)}, anchor=north west, draw=white!80!black},
tick align=outside,
tick pos=left,
title={r102},
x grid style={white!69.0196078431373!black},
xlabel={\(\displaystyle \Gamma\)},
xmin=-0.4, xmax=8.4,
xtick style={color=black},
y grid style={white!69.0196078431373!black},
ymin=118.177387116712, ymax=705.412373735494,
ytick style={color=black}
]
\addplot [semithick, red, mark=*, mark size=3, mark options={solid}, only marks]
table {%
0 530.089474429335
1 656.071648523551
2 678.719874343731
3 678.719874343731
4 678.719874343731
5 678.719874343731
6 678.719874343731
7 678.719874343731
8 678.719874343731
};
\addlegendentry{K = 1}
\addplot [semithick, color0, mark=*, mark size=3, mark options={solid}, only marks]
table {%
0 215.35800875793
1 286.29035020091
2 299.734635113224
3 299.734635113224
4 299.734635113224
5 299.734635113224
6 299.734635113224
7 299.734635113224
8 299.734635113224
};
\addlegendentry{K = 2}
\addplot [semithick, green!50!black, mark=*, mark size=3, mark options={solid}, only marks]
table {%
0 144.869886508475
1 171.271169711741
2 185.81049355508
3 187.900882469805
4 187.900882469805
5 187.900882469805
6 187.900882469805
7 187.900882469805
8 187.900882469805
};
\addlegendentry{K = 3}
\end{groupplot}

\end{tikzpicture}}
\caption{Optimal values for the respective $\Gamma$--counterparts for instances rc101 (upper left), rc102 (upper right), c101 (lower left) and r102 (lower right), with $N = [8]$ and $K \in [3]$. If a yellow point for a value of $\Gamma$ is 'missing', its value coincides with the green point of the same $\Gamma$.\label{Fig:OptValues8}}
\end{figure}
In Figure~\ref{Fig:OptValues8} we have the robust optimal values
for $N = [8]$, $K \in [3]$ and the instances rc101, rc102, c101 and
r102 (we have neglected the other two cases and the results for $N=[10]$ because the graphs are similar). As expected, the optimum value, i.e., the waiting time, increases with an
increasing number of vehicles $K$. In addition, at $K = 1$ the optimal value for increasing $\Gamma$ strongly rises, while at $K = 2,3$ the change in the optimal value is not so marked. This is also to be expected: If there is exactly one vehicle, the changes in the due times are supposed to be met by this one vehicle, which is clearly not really possible, especially in clustered settings. However, the total delays are more robust for $K = 2,3$ -- while the robust values differ between $K = 1,2,3$, the difference
between $K=1$ and $K=2$ is much higher than in $K=2$ and $K=3$. So if more vehicles are available, this can lead to more robust solutions. The difference  in the price of robustness is evident, e.g. in c101: For $K = 1$ the nominal optimal value is less than $200$ and for $K = 2,3$ it is $0$. For $\Gamma = 1$ and $K = 1$ we obtain a delay of at least $400$, while for $K = 2,3$ we remain around the optimal nominal value for $K = 1$. For $\Gamma = 2$ the delay increases only slightly and does not change afterwards. However, for $K = 1$, the optimum value increases up to $\Gamma = 6$ and is above $1200$, while for $K = 2,3$ the optimum value is below $200$. We note that for other cases, the difference between the nominal optimal values and the optimal values for $\Gamma \geq 1$ is not as large as can be seen in r102. In this particular case, the increase in nominal optimal values for $\Gamma$ stopped at $\Gamma = 2$ for all $K = 1,2,3$. \\
Table~\ref{Table:8_Patients} and Table~\ref{Table:10_Patients} show the running time to solve
the $\Gamma$--counterpart for $\Gamma \in [2]$, the nominal program for all instances with $N = [8], [10]$ and $K \in [3]$. As the number of
constraints increases with more customers and more vehicles, i.e., rising $|N|$ and $K$, the running time increases in most cases. Note that when reformulating the
$\Gamma$--counterpart, only the objective of the subproblems~\eqref{Prob:SubProbVRPRec} will be affected, while the
optimal solutions of the other subproblems can be reused. Thus, of the
$2|N| + 1$ programs, only $|N|$ programs need to be solved to obtain an optimal solution of the $\Gamma$ counterpart when different values of $\Gamma$ are considered. This explains the fact that the running time for $\Gamma = 2$ is usually at most half as large as that for $\Gamma =1$. We note that the value of $\Gamma$ does not have any other significant influence on the running time and that the running times are relatively high, especially for $|N| = 10$.

\begin{table}[H]
\centering
\caption{Running time of various instances in seconds for $\Gamma= 1,2$, the nominal case, $N = [8]$ and $K = 1,2,3$. \label{Table:8_Patients}}
\begin{tabular}{|l|r|r|r|r|r|r|r|r|r|}
\hline
Instances: & \multicolumn{3}{|c|}{Nominal case}& \multicolumn{3}{|c|}{$\Gamma = 1$} & \multicolumn{3}{|c|}{$\Gamma = 2$} \\
  \cline{2-10}
$N = [8]$ & $K = 1$ & $K = 2$ & $K = 3$ & $K = 1$ & $K = 2$ & $K = 3$ & $K = 1$ & $K = 2$ & $K = 3$\\ \hline
r101 & 17 & 1 & 2 & 79 & 43 & 26& 40 & 15 & 14 \\ \hline
r102 & 11& 22 & 15 & 105 & 356 &246 & 49 & 136 & 123 \\ \hline 
c101 & 1 & 1 & 1 & 28 & 7 & 8 & 19 & 4 & 5 \\ \hline
c102 & 10 & 21 & 24 & 124 & 394 & 434 & 62 & 183 & 298 \\ \hline
rc101 & 5 & 1 & 2 & 107 & 156 & 224& 30& 19& 22 \\ \hline
rc102 & 7 & 69 & 291 & 101 & 823 & 2965 & 49 & 284& 1282 \\ \hline
\end{tabular}
\end{table}

\begin{table}[H]
\centering
\caption{Running time of various instances in seconds for $\Gamma= 1,2$, the nominal case, $N = [10]$ and $K = 1,2,3$. If no optimal solution has been obtained after 24 hours, the resp. fields are marked with --. \label{Table:10_Patients}}
\begin{tabular}{|l|r|r|r|r|r|r|r|r|r|}

\hline
Instances: & \multicolumn{3}{|c|}{Nominal case}& \multicolumn{3}{|c|}{$\Gamma = 1$} & \multicolumn{3}{|c|}{$\Gamma = 2$} \\
  \cline{2-10}
$N = [10]$ & $K = 1$ & $K = 2$ & $K = 3$ & $K = 1$ & $K = 2$ & $K = 3$ & $K = 1$ & $K = 2$ & $K = 3$ \\ \hline
r101 & 551 & 6 & 3 & 9080 & 5178 & 2316 & 5612 & 1525 & 376 \\ \hline
r102 & 1175 & 1066 & 119 & 15358 & 54885 & 15637 & 9433 & 18285 & 5875 \\ \hline 
c101 & 15 & 2 & 5 & 8026 & 2444 & 168 & 4161 & 65 & 28 \\ \hline
c102 & 1566 & 431 & 111 & 22081 & 32098 & 21335 & 15285 & 12692 & 12703  \\ \hline
rc101 & 681 &14 &  7& 8279 & 11024 & 9364 & 2949 & 1253& 350 \\ \hline
rc102 & 902 & 2505 & 8425 & 13327 & 70041 & -- & 6804 & 37788 &--  \\ \hline

\end{tabular}
\end{table}

This concludes our numerical study of the VRPGTW under uncertainty. As already mentioned, we used an optimization oracle to solve the programs given in Example~\ref{Ex:UncertainBounds} as a MINLP instead of using any VRPTGW solvers to demonstrate that our reformulation can be solved to optimality. In the future, it might be interesting to conduct experiments including instances with more customers but rather than solving them to global optimality, they could be solved only to a certain gap, i.e., to find solutions which are 'sufficiently robust' or to apply a VRPTGW oracle.

\subsection{Quadratic assignment problem under uncertainty}
Here, we solve and compare different reformulations of the $\Gamma$--counterpart of the QAP. We have chosen instances from \cite{QAP2015} and from the QAPLIB \cite{QAPlib}. The goal of this section is to prototypically evaluate whether the new reformulations can be solved within a similar order of
magnitude when compared to that of the nominal versions. As we do not have an efficient problem--specific QAP oracle at hand, we chose small instances where the $\Gamma$--counterpart could be solved with Gurobi within 24 hours. As expected, instances with less uncertain coefficients are computationally easier to handle. Therefore, by choosing scr12, we included an instance with $c_{i,j} = 0$ for some $(i,j) \in [12]^2$. We also chose fei9, an instance that was examined in \cite{QAP2015}. For fei9, the number of facilities is $n = 9$, while for scr12, it is 
$n =12$ (both taken from \cite{QAP2015}). Finally, we also chose nug12 from \cite{QAPlib}. 
For each instance, we generated three different uncertainty sets. For fei9, the uncertainty set $\mathcal{U}_1$ is taken from \cite{QAP2015}. Other uncertainty sets, denoted by $\mathcal{U}_2$ and
$\mathcal{U}_3$, are generated randomly: for all $(i,j) \in [n]^2$, $\Delta c_{ij} \in [0,\bar{c}_{ij}]$ is randomly chosen. 
For scr12 and nug6, $\mathcal{U}_1$ is generated by setting $\Delta c_{ij} =
0.1  \bar{c}_{ij}$ for all $(i,j) \in [n]^2$. Furthermore, $\mathcal{U}_2$ and $\mathcal{U}_3$ are generated randomly analogously to fei9. 

In Figure~\ref{Fig:obj} the change in the objective value for different $\Gamma$ can be observed for two of our instances are shown. As expected, the optimal objective value is increasing in $\Gamma$. As can be seen for scr12, only a mild increase in cost of
robust protection can be seen for increasing values of $\Gamma$. 

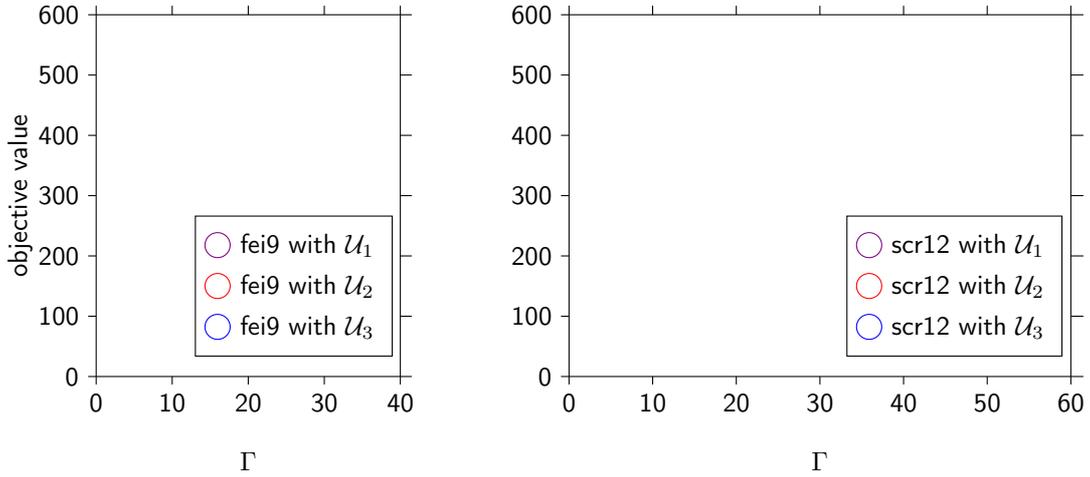
\begin{figure}[H]
\begin{center}
\centering
\begin{subfigure}[c]{0.43\textwidth}
\begin{tikzpicture}[y=.008cm, x=.1cm,font=\sffamily, only marks]
	\draw (0,0) -- coordinate (x axis mid) (40,0);
	\draw (0,600) -- coordinate (x axis mid) (40,600);
	\draw (40,0) -- coordinate (y axis mid) (40,600);
    	\draw (0,0) -- coordinate (y axis mid) (0,600);
    	
    	\foreach \x in {0,10,...,40}
     		\draw (\x,0pt) -- (\x,-3pt)
     		node[anchor=north] {\x};
     	\foreach \x in {0,10,...,40}
     		\draw (\x,615) -- (\x,600);
    	\foreach \y in {0,100,...,600}
     		\draw (0pt,\y) -- (-3pt,\y) 
     			node[anchor=east] {\y }; 
     	\foreach \y in {0,100,...,600}
     		\draw (40,\y) -- (41.5,\y);
	\node[below=0.8cm] at (20,-10) {$\Gamma$};
	\node [rotate=90] at(-10,300) {objective value};

	\draw[blue] plot[mark=*, style={mark size=0.8pt}, mark options={fill=blue}] 
	    file {neuobjpap9u2.data};
	\draw[violet] plot[ mark=*, style={mark size=0.8pt}, mark options={fill=violet}] 
	    file {objpap9.data};
	 \draw[red] plot[ mark=*, style={mark size=0.8pt}, mark options={fill=red}] 
	    file {neuobjpap9u1.data};
	 \matrix [draw,left] at (39,150) {
  \node [shape=circle, draw=violet,label=right:fei9 with $\mathcal{U}_1$ ] {}; \\
   \node [shape=circle, draw=red,label=right:fei9 with $\mathcal{U}_2$ ] {}; \\  	 
  \node [shape=circle, draw=blue,label=right: fei9 with $\mathcal{U}_3$ ] {}; \\
};
\end{tikzpicture}
\subcaption{Optimal objective value $\times 10^{-4}$ for different values of $\Gamma$  for fei9.  \label{fig:objpap9}}
\end{subfigure}%
\hspace{0.1cm}
\begin{subfigure}[c]{0.55\textwidth}
\begin{tikzpicture}[y=.008cm, x=.11cm,font=\sffamily, only marks]
	\draw (0,0) -- coordinate (x axis mid) (60,0);
		\draw (0,600) -- coordinate (x axis mid) (60,600);
    	\draw (60,0) -- coordinate (y axis mid) (60,600);
    	\draw (0,0) -- coordinate (y axis mid) (0,600);
    	\foreach \x in {0,10,...,60}
     		\draw (\x,0pt) -- (\x,-3pt)
			node[anchor=north] {\x};
		\foreach \x in {0,10,...,60}
     		\draw (\x,615) -- (\x,600);
    	\foreach \y in {0,100,...,600}
     		\draw (0pt,\y) -- (-3pt,\y) 
     			node[anchor=east] {\y }; 
     	\foreach \y in {0,100,...,600}
     		\draw (60,\y) -- (61.5,\y);
	\node[below=0.8cm] at (30,-10) {$\Gamma$};

	\draw[blue] plot[ mark=*, style={mark size=0.8pt}, mark options={fill=blue}] 
	    file {neuobjscr12u2.data};
	\draw[violet] plot[ mark=*, style={mark size=0.8pt}, mark options={fill=violet}] 
	    file {objscr12.data};
	 \draw[red] plot[ mark=*, style={mark size=0.8pt}, mark options={fill=red}] 
	    file {neuobjscr12u1.data};
	 \matrix [draw,left] at (59,150) {
  \node [shape=circle, draw=violet,label=right:scr12 with $\mathcal{U}_1$] {}; \\ 
   \node [shape=circle, draw=red,label=right:scr12 with $\mathcal{U}_2$] {}; \\
  \node [shape=circle, draw=blue,label=right: scr12 with $\mathcal{U}_3$] {}; \\ 
};
\end{tikzpicture}
\subcaption{Optimal objective value $\times 10^{-2}$ for different values of $\Gamma$ for scr12. \label{fig:objscr12}}
\end{subfigure}
\caption{Optimal solution for different instances.}\label{Fig:obj}
\end{center}
\end{figure}

Now we compare the running time of different equivalent formulation of the $\Gamma$--counterpart. In particular, we test following formulations: 
\begin{itemize}
\item \texttt{QAP}: Formulation~\eqref{Prob:QAP_Reform_Two}.
\item $\texttt{QAP}_\texttt{red}$: Formulation~\eqref{Prob:QAP_Reform_Two} after reducing the number of subproblems with Theorem~\ref{Theo:RedNumber}.
\item \texttt{MIP}: A linearized QAP under uncertainty after applying Theorem~1 of \cite{IntroGamma}.
\item \texttt{BP}: A linearized QAP under uncertainty after applying Proposition~\ref{Prop:Tractable_Gamma_Comb}.
\item $\texttt{BP}_\texttt{red}:$ A linearized QAP under uncertainty after applying Proposition~\ref{Prop:Tractable_Gamma_Comb} and Theorem 1 of \cite{Lee2014}. 
\end{itemize} 
In particular, we apply a standard linearization: the product of two binary variables $x$ and $y$ can be replaced by a binary variable $z$ and the set of inequalities
\begin{gather*}
z \leq x, \ z \leq y, \ z \geq x + y - 1.
\end{gather*}
The nominal programs can be solved  within a few seconds.
A comparison of running times for fei9, scr12 and nug12 and $\Gamma = 1$ can be found in
Tables \ref{Table_U1}, \ref{Table_U2} and \ref{Table_U3}. If no optimal solution could be computed after 24 hours, we stopped the process. Running times are measured in seconds. 
\begin{table}[H]
\centering
\caption{Comparison of running times for different instances for $\Gamma = 1$ and the deterministically constructed uncertainty set $\mathcal{U}_1$. -- if not solvable within 24 hours. \label{Table_U1}} 
\begin{tabular}{|l|r|r|r|}
\hline 
 CPU (s)& nug12 & fei9 & scr12  \\ 
\hline 
$\texttt{QAP}$& -- & 3420 & 36579
 \\ 
\hline 
$\texttt{QAP}_\texttt{red}$ & 1054  & 174 & 221  \\ 
\hline 
$\texttt{MIP}$ & 31417 & 25 &  1718 \\ 
\hline 
$\texttt{BP}$ & -- & 86243 & -- \\ 
\hline 
$\texttt{BP}_\texttt{red}$ & 34318 & 4096 & 49126 \\ 
\hline
\end{tabular}
\end{table}
\begin{table}[H]
\centering
\caption{Comparison of running times for different instances for $\Gamma = 1$ and the deterministically constructed uncertainty set $\mathcal{U}_2$. -- if not solvable within 24 hours. \label{Table_U2}} 
\begin{tabular}{|l|r|r|r|}
\hline 
 CPU (s)& nug12 & fei9 & scr12  \\ 
\hline 
$\texttt{QAP}$& -- & 3542 & 74641
 \\ 
\hline 
$\texttt{QAP}_\texttt{red}$ & 591  & 178 & 298  \\ 
\hline 
$\texttt{MIP}$ & 24655 & 25 &  819 \\ 
\hline 
$\texttt{BP}$ & -- & -- & -- \\ 
\hline 
$\texttt{BP}_\texttt{red}$ & -- & 4538 & 73847 \\ 
\hline
\end{tabular}
\end{table}
\begin{table}[H]
\centering
\caption{Comparison of running times for different instances for $\Gamma = 1$ and the randomly constructed uncertainty set $\mathcal{U}_3$. - if not solvable within 24 hours. \label{Table_U3}} 
\begin{tabular}{|l|r|r|r|}
\hline 
 CPU (s)& nug12 & fei9 & scr12  \\ 
\hline 
$\texttt{QAP}$ & -- & 3503 & 45851
 \\ 
\hline 
$\texttt{QAP}_\texttt{red}$ & 9507  & 178 & 275  \\ 
\hline 
$\texttt{MIP}$ & 19550 & 30 &  1860 \\ 
\hline 
$\texttt{BP}$ & -- & -- & -- \\ 
\hline 
$\texttt{BP}_\texttt{red}$ & -- & 4384 & 56979 \\ 
\hline
\end{tabular}
\end{table}
It is evident that the instances with $n=12$ can be solved more
efficiently than the linearizations after reducing the number of
subproblems by excluding all redundant scenarios (applying
Theorem~\ref{Theo:RedNumber}, neglecting identical subproblems and
taking symmetry of coefficients into account), for all regarded
uncertainty sets. Only for the smaller instance, \texttt{MIP} is
faster. This demonstrates the benefit of the reformulations
proposed here. Without using them, the corresponding robust
counterparts are algorithmically very challenging. All instances have in common that without reducing
the number of programs, i.e., avoiding a repetition of scenarios or
applying Theorem~\ref{Theo:RedNumber}, these instances cannot be
solved within the time limit, even for smaller instances.

Finally, we would like to point out two things: Firstly, if one would
like to solve $\Gamma$--counterpart for different values of $\Gamma$,
it is preferable to apply $\texttt{QAP}_\texttt{red}$ since one only
has to calculate the optimal solutions of the subproblems for $\Gamma
= 1, 2$, since the value of $\Gamma$ does not influence the
subproblems. Secondly, this computational study 
demonstrates that our formulations are applicable in
practice. Naturally, instead of using Gurobi, one can also use
algorithms that solve QAPs more efficiently. However, for
our purposes, our method proved to be highly beneficial, when compared to the standard
linearization approach.

\section{Appendix: Uncertainty in the Constraints}\label{sec:UncConst}
Here, we consider the case of a constraint being subject to uncertainty, i.e., program
\begin{align}\label{Prob:NomUncertainConstraint}
\begin{split}
\inf_{x \in \mathcal{X}} & \ f(x), \\
\mathrm{s.t.} & \sum_{i \in [m]} \overline{g}_i(x) \leq 0.
\end{split}
\end{align}

Analogously to Section~\ref{sec:Non_Linear_Objective_Functions}, we assume that the functions $\overline{g}_i$ are subject to  uncertainty, i.e., we set 
\begin{gather*}
g_i \colon \mathcal{X} \times \mathcal{U}_i \to \R, 
\end{gather*}
with $g_i(x, \overline{u}^i) := \overline{g}_i(x)$ for a nominal scenario $\overline{u}^i \in \mathcal{U}_i$. Thus, program~\eqref{Prob:NomUncertainConstraint} under uncertainty can be stated as  
\begin{align}\label{Prob:uncertainconstrains}
\begin{split}
\inf_{x \in \mathcal{X}} & \ f(x),\\
\mathrm{s.t.}& \sum_{i\in [m]} \sup_{u^i \in \mathcal{U}^i}g_i(x,u^i) \le 0.
\end{split}
\end{align}
The $\Gamma$--counterpart of program~\eqref{Prob:uncertainconstrains} is given by
\begin{align}\label{Prob:GammaCounterpartuncertainconstrains}
\begin{split}
\inf_{x \in \mathcal{X}} & \ f(x),\\
\text{s.t.}& \sup_{\mathcal{S} \subseteq [m]: |\mathcal{S}| \leq \Gamma} \left\{ \sum_{i \in \mathcal{S}} \sup_{u^i \in \mathcal{U}_i} g_i(x,u^i) + \sum_{i \in [m] \setminus \mathcal{S}} g_i(x, \overline{u}^i) \right\}  \leq 0.
\end{split}
\end{align}

Equivalent to Lemma~\ref{Lem:ReModel}, we can obtain a reformulation without the outer supremum operator:
\begin{lemma}\label{Lem:ConstraintModel}
If $\Gamma \in [m]$, then program~\eqref{Prob:GammaCounterpartuncertainconstrains} is equivalent to 
\begin{align}\label{Prob:uncertainconstrainsdual}
\begin{split}
\inf_{x,p,\theta} & \ f(x),\\
\mathrm{s.t.} &\ \sum_{i \in [m]} g_i(x,\bar{u}^i)+p_i + \theta \Gamma \le 0, \\
&p_i +\theta \ge \sup_{u^i \in \mathcal{U}_i} g_i(x,u^i)-g_i(x,\bar{u}^i) \ \forall i \in [m],\\
&p,\theta \ge 0, \\
&x \in \mathcal{X}.
\end{split}
\end{align}
\end{lemma}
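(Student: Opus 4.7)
The plan is to mimic the proof of Lemma~\ref{Lem:ReModel}, replacing the outer objective reformulation by an analogous constraint reformulation, since the structure of the inner worst-case expression is identical. First, I would rewrite the inner supremum
\[
\sup_{\mathcal{S} \subseteq [m]: |\mathcal{S}| \leq \Gamma} \left\{ \sum_{i \in \mathcal{S}} \sup_{u^i \in \mathcal{U}_i} g_i(x,u^i) + \sum_{i \in [m] \setminus \mathcal{S}} g_i(x, \overline{u}^i) \right\}
\]
by introducing binary indicator variables $s_i \in \{0,1\}$ with $s_i=1$ iff $i\in\mathcal{S}$, yielding
\[
\sup_{s} \ \sum_{i \in [m]} g_i(x, \overline{u}^i) + s_i\left(\sup_{u^i \in \mathcal{U}_i} g_i(x, u^i) - g_i(x, \overline{u}^i)\right), \quad \text{s.t. } \sum_{i}s_i\leq \Gamma,\ s\in\{0,1\}^m.
\]
For fixed $x$, this is a 0/1 knapsack-type program with a totally unimodular constraint matrix (a single cardinality constraint plus box constraints), so its LP relaxation has the same optimal value as the integer program.

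Next, I would take the LP dual of the relaxation: associating a dual variable $\theta\geq 0$ with $\sum_i s_i \leq \Gamma$ and dual variables $p_i\geq 0$ with the bound constraints $s_i \leq 1$, strong LP duality gives that the inner supremum equals
\[
\sum_{i\in[m]} g_i(x,\overline{u}^i) + \inf_{p,\theta \geq 0} \left\{ \Gamma \theta + \sum_{i\in[m]} p_i : \ p_i + \theta \geq \sup_{u^i \in \mathcal{U}_i} g_i(x,u^i) - g_i(x,\overline{u}^i) \ \forall i \right\}.
\]
Substituting this expression into the constraint of \eqref{Prob:GammaCounterpartuncertainconstrains} gives a condition of the form
$\sum_i g_i(x,\overline{u}^i) + \inf_{p,\theta}\{\dots\} \leq 0$,
which holds for some feasible $x$ if and only if there exist $p,\theta\geq 0$ jointly with $x$ satisfying the constraints listed in \eqref{Prob:uncertainconstrainsdual}. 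Hoisting $p,\theta$ up to the outer minimization (this is valid because $f(x)$ does not depend on $p,\theta$) yields precisely program~\eqref{Prob:uncertainconstrainsdual}, completing the equivalence.

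The only non-routine step is justifying the hoisting direction: I need to argue that $x$ is feasible for~\eqref{Prob:GammaCounterpartuncertainconstrains} if and only if there exist $p,\theta\geq 0$ with $(x,p,\theta)$ feasible for~\eqref{Prob:uncertainconstrainsdual}. The ``only if'' direction uses that the infimum over $(p,\theta)$ of the dual is attained (or at least approached by feasible tuples) and bounded above by $-\sum_i g_i(x,\overline{u}^i)$. The ``if'' direction uses weak duality: any feasible $(p,\theta)$ gives an upper bound on the inner supremum, so the constraint $\sum_i g_i(x,\overline{u}^i) + \sum_i p_i + \Gamma\theta \leq 0$ implies the worst-case constraint in~\eqref{Prob:GammaCounterpartuncertainconstrains}. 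The objective $f(x)$ is unaffected by this reformulation, so the optimal values coincide. The main subtle point to be careful about is that $\Gamma\in[m]$ (integrality and $\Gamma\geq 1$) is what ensures the LP duality argument is clean and avoids degeneracies in the cardinality constraint.
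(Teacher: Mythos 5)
Your proof is correct and takes essentially the same route as the paper's: it reduces the worst-case left-hand side to the binary/LP-relaxation/strong-duality argument of Lemma~\ref{Lem:ReModel} and then drops the inner infimum by hoisting $(p,\theta)$ into the outer program. Your explicit treatment of the hoisting step (attainment of the dual infimum for the ``only if'' direction, weak duality for the ``if'' direction) is precisely what the paper compresses into the remark that ``the $\inf$ operator can be omitted.''
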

\begin{proof}
The proof is almost identical to the proof of Lemma~\ref{Lem:ReModel} since the constraint subject to uncertainty of $\Gamma$--counterpart~\eqref{Prob:GammaCounterpartuncertainconstrains} is the objective of the $\Gamma$--counterpart of the $\Gamma$--counterpart~\eqref{Prob:GeneralGammaCounterpart} with uncertainty in the objective. In this case, we obtain
\begin{alignat*}{2}
0 \geq & \inf_{\theta,p} \ \sum_{i \in [m]} g_i(x,\bar{u}^i)+p_i+ \theta \Gamma  ,\\
& \ \text{s.t.}  \ p_i +\theta \ge \sup_{u^i \in \mathcal{U}_i} 
g_i(x,u^i)-g_i(x,\bar{u}^i),\ \forall i \in [m],\\
 & \ \ \ \ \ \ p,\theta \geq 0.
\end{alignat*}
Thus, the $\inf$ operator can be omitted and the claim is proven.
\end{proof}
Comparing the reformulations of the $\Gamma$--counterparts of Lemmas~\ref{Lem:ReModel} and \ref{Lem:ConstraintModel}, the only difference is the inequality 
\begin{gather}\label{Ineq:Difference}
\sum_{i \in [m]} g_i(x,\overline{u}^i) + p_i + \Gamma \theta \leq 0.
\end{gather} 
However, the left hand side of inequality~\eqref{Ineq:Difference} is the objective of program~\eqref{Prob:GeneralReModel}. More importantly, the bottleneck of both $\Gamma$--counterparts is the inequality 
%
%

\[p_i +\theta \ge \sup_{u^i \in \mathcal{U}_i} g_i(x,u^i)-g_i(x,\bar{u}^i),\]
for which we already discussed several reformulations in Section~\ref{sec:Non_Linear_Objective_Functions}. Hence, the reformulation techniques are still applicable here and for MINLPs, we obtain, under the analogue of Assumption~\ref{Ass:General}, the same reformulations: 
\begin{theorem}\label{Theo:FenchelConstraint}
Assume that Assumption~\ref{Ass:General} holds for all $g_i(x,u^i)$ and $\mathcal{U}_i$ and $\Gamma \in [m]$. Then program~\eqref{Prob:GammaCounterpartuncertainconstrains} is equivalent to 
\begin{align}\label{Prob:FenchelConstraint}
\begin{split}
\inf_{x,p,\theta} & \ f(x),\\
\text{s.t.} & \sum_{i \in [m]} g_i(x,\bar{u}^i)+p_i + \theta \Gamma \le 0, \\
&p_i + \theta \geq (\overline{u}^i)^Tv^i + \support{i} - g_{i,*}(x,v^i) - g_i(x, \overline{u}^i) \ \forall i \in [m], \\
&p,\theta \ge 0,\\
&x \in \mathcal{X}.
\end{split}
\end{align}
Furthermore, if Assumption~\ref{Ass:GeneralLinear} holds, the program~\eqref{Prob:GammaCounterpartuncertainconstrains} is equivalent to 
\begin{align}\label{Prob:FenchelGammaCounterpartLinearConst}
\begin{split}
\inf_{x,p,\theta} &  \ f(x) \\
\mathrm{s.t.} & \ \sum_{i \in [m]} g_i(x,\bar{u}^i)+p_i + \theta \Gamma \le 0,\\
& p_i + \theta \geq   \supportx{i}  \ \forall i \in [m], \\
& p, \theta \geq 0, \\
& x \in \mathcal{X}. 
\end{split}
\end{align}
\end{theorem}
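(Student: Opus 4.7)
The plan is to mirror the proof strategy used for Corollaries~\ref{Prop:FenchelGammaCounterpart} and \ref{Prop:FenchelGammaCounterpartLinear}, which handled the objective--uncertainty case, but with Lemma~\ref{Lem:ConstraintModel} replacing Lemma~\ref{Lem:ReModel} as the starting point. Since the only place where uncertainty enters program~\eqref{Prob:uncertainconstrainsdual} is through the set of inequalities
\[
p_i + \theta \geq \sup_{u^i \in \mathcal{U}_i} g_i(x,u^i) - g_i(x, \overline{u}^i) \quad \forall i \in [m],
\]
the entire work is in reformulating each of these scalar robust inequalities via Proposition~\ref{Prop:GeneralPrinciple}.

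First I would rewrite the $i$--th robust inequality in the form required by Proposition~\ref{Prop:GeneralPrinciple}, namely
\[
\sup_{u^i \in \mathcal{U}_i} \bigl( g_i(x,u^i) - g_i(x, \overline{u}^i) - p_i - \theta \bigr) \leq 0.
\]
The inner expression is still concave in $u^i$ under Assumption~\ref{Ass:General}(ii), since $-g_i(x,\overline{u}^i) - p_i - \theta$ is constant in $u^i$, and the uncertainty set $\mathcal{U}_i$ is still an affine transformation of a convex set whose relative interior contains $\overline{u}^i$ by (i) and (iii). Proposition~\ref{Prop:GeneralPrinciple} then yields the existence of $v^i \in \R^{L_i}$ such that
\[
(\overline{u}^i)^T v^i + \delta^*\bigl((A^i)^T v^i \mid \mathcal{Z}_i\bigr) - g_{i,*}(x,v^i) - g_i(x,\overline{u}^i) - p_i - \theta \leq 0,
\]
which after rearrangement is exactly the second constraint of \eqref{Prob:FenchelConstraint}. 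Putting this back into \eqref{Prob:uncertainconstrainsdual} (with $v^1,\dots,v^m$ added as decision variables) establishes the first equivalence. The main subtlety here is making sure that the concave conjugate is applied to the correct variable and that the additive constant $-p_i - \theta$ is absorbed correctly; but since this constant does not depend on $u^i$, it just appears verbatim in the Fenchel--style reformulation.

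For the linear case under Assumption~\ref{Ass:GeneralLinear}, I would then invoke the same observation used in the proof of Corollary~\ref{Prop:FenchelGammaCounterpartLinear}: when $g_i(x,u^i) = (u^i)^T l_i(x)$, the partial concave conjugate satisfies $g_{i,*}(x,v^i) \neq -\infty$ if and only if $v^i = l_i(x)$, and the reformulated inequality is vacuous when $g_{i,*}(x,v^i) = -\infty$. Fixing $v^i = l_i(x)$ therefore eliminates the extra decision variables and collapses the constraint to
\[
p_i + \theta \geq \delta^*\bigl((A^i)^T l_i(x) \mid \mathcal{Z}_i\bigr),
\]
using additionally that $g_i(x,\overline{u}^i) = (\overline{u}^i)^T l_i(x)$ to simplify the budget--type constraint $\sum_{i\in[m]} g_i(x,\overline{u}^i) + p_i + \theta \Gamma \leq 0$, which already appears in this form in \eqref{Prob:FenchelGammaCounterpartLinearConst}. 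This gives the second equivalence.

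The main obstacle, if any, is purely bookkeeping: one must verify that moving $-p_i - \theta$ inside the supremum does not damage concavity or any of the regularity conditions in Assumption~\ref{Ass:General}, and that the existence of $v^i$ guaranteed by Proposition~\ref{Prop:GeneralPrinciple} for each fixed $x$ is compatible with treating $v^i$ as an additional optimization variable in the joint problem. Both are standard: the constant shift preserves concavity and the relative interior condition, and swapping $\exists v^i$ with $\inf$ over $v^i$ is valid because the inequality must hold for \emph{some} $v^i$, which is exactly what optimization over $v^i$ encodes in a feasibility constraint. No further ingredients are needed beyond Lemma~\ref{Lem:ConstraintModel} and Proposition~\ref{Prop:GeneralPrinciple}.
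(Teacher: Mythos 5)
Your proposal is correct and follows exactly the route the paper intends: the paper omits the proof of this theorem and simply points to Corollaries~\ref{Prop:FenchelGammaCounterpart} and~\ref{Prop:FenchelGammaCounterpartLinear}, whose arguments (Lemma~\ref{Lem:ConstraintModel} as the starting point, Proposition~\ref{Prop:GeneralPrinciple} applied to each robust inequality, and the observation that $g_{i,*}(x,v^i)\neq-\infty$ forces $v^i=l_i(x)$ in the linear case) are precisely the ones you reproduce.
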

The proof is omitted (see  Corollaries~\ref{Prop:FenchelGammaCounterpart} and~\ref{Prop:FenchelGammaCounterpartLinear}). 
However, we would like to point out that, contrary to Section~\ref{sec:Non_Linear_Objective_Functions}, the dual variables $p$ and $\theta$ can not be eliminated due to the additional inequality~\eqref{Ineq:Difference}. This arises as $\theta$ is additionally multiplied with $\Gamma$. However, since the feasible set $\mathcal{X}$ was subject to uncertainty in program~\eqref{Prob:GammaCounterpartuncertainconstrains}, it is not necessary to optimize over $\mathcal{X}$ only.

\end{document}